\theoremstyle{thmstyletwo}%
\newtheorem{theorem}{Theorem}
\newtheorem{definition}{Definition}
\numberwithin{equation}{section}
\newtheorem{lemma}[theorem]{Lemma}
\newtheorem{assumption}[theorem]{Assumption}
\newtheorem{condition}[theorem]{Condition}
\newtheorem{example}[theorem]{Example}
\newtheorem{remark}[theorem]{Remark}
\newtheorem{proof}[theorem]{Proof}
\title{Strong convergence in the infinite horizon of numerical methods for stochastic differential equations}
\author{
 Wei Liu and Yudong Wang \\
 Department of Mathematics, Shanghai Normal University, Shanghai, 200234, China\\
 \texttt{weiliu@shnu.edu.cn; wangyudong19980120@gmail.com}
}
\begin{document}
\maketitle

\begin{abstract}
The strong convergence of numerical methods for stochastic differential equations (SDEs) for $t\in[0,\infty)$ is proved. The result is applicable to any one-step numerical methods with Markov property that have the finite time strong convergence and the uniformly bounded moment. In addition, the convergence of the numerical stationary distribution to the underlying one can be derived from this result. To demonstrate the application of this result, the strong convergence in the infinite horizon of the backward Euler-Maruyama method in the $L^p$ sense for some small $p\in (0,1)$ is proved for SDEs with super-linear coefficients, which is also a a standalone new result. Numerical simulations are provided to illustrate the theoretical results. 
\end{abstract}

\keywords{stochastic differential equation \and strong convergence in the infinite horizon \and stationary distribution \and one step method \and the backward Euler-Maruyam method}

\section{Introduction}
The strong convergence in a finite time interval of numerical methods for stochastic differential equations (SDEs) has been an essential topic and attract lots of attention in the past decades. For any new proposed numerical methods of SDEs, the finite time strong convergence is always one of the fundamental properties to be investigated, for example the semi-implicit Euler-Maruyama method \cite{Hu1996}, the truncated Euler-Maruyama method \cite{LMY2021,Mao2015}, the tamed Euler method \cite{DKS2016,HJK2012}, and the fundamental mean-square finite time strong convergence theorem for any one-step method \cite{TZ2013}. Briefly speaking, for the solution of some SDE, $x(t)$, and its corresponding numerical solution, X(t), that is produced by some numerical method, the study of the strong convergence in some finite time interval seeks to find some upper bound of the difference between $x(t)$ and $X(t)$, i.e. for some positive constants $T$ and $p$
\begin{equation}\label{sec1:ftcon}
\sup_{0\leq t \leq T} \mathbb{E} |x(t) - X(t)|^p \leq C_T h,
\end{equation}
where $h$ is the step size and $C_T$ is a constant dependent on $T$. In most existing literatures, $C_T$ is an increasing function in terms of $T$, which means the above estimate of the error of the numerical method would become useless as $T \rightarrow \infty$.
\par
While, in this paper we try to obtain the error bound  for some constant $C$ which is independent from $T$, i.e.
\begin{equation}\label{sec1:ihcon}
\sup_{t \geq 0} \mathbb{E} |x(t) - X(t)|^p \leq C h,
\end{equation}
which is what the title of this paper indicates.
\par
A  naive question would be whether every numerical method that has the property \eqref{sec1:ftcon} possesses the property \eqref{sec1:ihcon} naturally. And the obvious answer is no. The geometric Brownian could be a quite illustrative example that if $ 2a+b^2 > 0$ then the numerical solution $X(t)$, for example generated by the EM method, to
\begin{equation*}
dx(t) = ax(t)dt + bx(t)dB(t)
\end{equation*}
satisfies \eqref{sec1:ftcon} but not \eqref{sec1:ihcon} for $p=2$. However, if the relation between $a$ and $b$ is changed into $2a+b^2 < 0$, we may have both  \eqref{sec1:ftcon} and  \eqref{sec1:ihcon} for $p=2$. In addition, for the case $2a + (p-1)b^2 <0$,  both  \eqref{sec1:ftcon} and  \eqref{sec1:ihcon} hold only for some small $p \in (0,1)$.
\par
Those observations from the geometric Brownian give us the hint that if the underlying solution has some moment boundedness property in the infinite horizon then there could be some numerical method that has the property \eqref{sec1:ftcon} as well as the property \eqref{sec1:ihcon}. And SDE models that naturally have such a boundedness property are not rarely found. Those SDE models that are used to describe to population, like the stochastic susceptible-infected-susceptible epidemic model \cite{GGHMP2011}, usually have their solutions naturally bounded within a finite area. In addition, we are also inspired by the above discussion that the variable $p$ may affect the results.
\par
This paper is devoted to study numerical methods for SDEs that are convergent strongly in the infinite horizon, i.e. \eqref{sec1:ihcon}. Since \eqref{sec1:ftcon} has been well studied  for many numerical methods, we do not want to waste those nice results in this paper. Hence, our strategy to prove \eqref{sec1:ihcon} for some numerical method is assuming that \eqref{sec1:ftcon}  is true together with some moment boundedness properties on the underlying and the numerical solutions.
\par
It is clear that such a strong convergence \eqref{sec1:ihcon} has its own importance. Moreover, the strong convergence in the infinite horizon may help to derive the same properties of numerical methods as those of underlying equations. For example, if the underlying SDE is stable in distribution, with the help of \eqref{sec1:ihcon}, we could conclude the numerical solution is also stable in distribution (see Section 2 for more detailed discussion). Other applications of such an infinite-horizon result contain approximating the mean exit time of some bounded domain of the underlying solution, as mentioned in Page 124 of \cite{HK2021} the finite time convergence does not apply for this problem. It should also be mentioned that such an error estimate for $t \in [0,\infty)$ of numerical methods for SDEs has been attracting increasing attention, see for example \cite{ACO2023} and \cite{CDO2021}.
\par
The main contribution of this paper is that we prove a new theorem that connects \eqref{sec1:ftcon} and  \eqref{sec1:ihcon} without giving much attention to the detailed structures of the numerical methods and the coefficients of the underlying SDEs.
\par
To demonstrate the application of the new theorem, we use the backward Euler-Maruyama (BEM) method as an example. For some small enough $p$, the BEM method is proved to be convergent in the infinite horizon for SDE with both superlinear drift and diffusion coefficients. And this is also a standalone new result, as in this paper the coefficient of the linear term in the drift coefficient is allowed to be positive, which is different from the existing result \cite{LMW2023}.
\par
This work is constructed in the following way. The main theorems and their proofs are put in Section 2. The strong convergence in the infinite horizon of the BEM method is proved in Section 3. Numerical results are displayed in Section 4. Section 5 sees the conclusion and future works.

\section{Assumptions and main results }
Throughout this paper, we let $(\Omega,\cal F,\mathbb{P})$ be a complete probability space with a filtration $\{{\cal F}_t\}_{t \geq 0}$ satisfying the usual conditions (that is, it is right continuous and ${\cal F}_0$ contains all $\mathbb{P}$-null sets). Let $W(t)$ be a scalar Brownian motion. Let $|\cdot|$ and $\langle \cdot \rangle$ denote the Euclidean norm and inner product in $\mathbb{R}^d$.

  In this paper, we consider the $d$-dimensional It\^{o} SDE
  \begin{equation}\label{SDE}
      dx(t)=f(x(t))dt+g(x(t))dW(t), \quad t \geq 0,
  \end{equation}
  with initial value $x(0)=x_0 \in \mathbb{R}^d$, where $f:\mathbb{R}^d \to \mathbb{R}^d$ and $g:\mathbb{R}^d \to \mathbb{R}^{d}$. In this paper, the underlying and the numerical solutions of \eqref{SDE} are sometimes defined as $x(t)$ and $X_k$, and sometimes to emphasize the initial value $x_0$, we also denote them by $x(t, x_0)$ and $X_k^{x_0}$. Now we list the following conditions.
\begin{condition} \label{as21}
  \begin{enumerate}
\item[(i)] There is a numerical method that can be used to approximate the solution of \eqref{SDE}, and the numerical solution $\{X_k\}_{k \geq 0}$ defined by this method is a time-homogeneous Markov process.
\item[(ii)]The underlying solution of \eqref{SDE} has attractive property, i.e., for some $p > 0$, and any compact subset $K \in \mathbb{R}^d$, any two underlying solutions with different initial values satisfy
\begin{equation*}
    \mathbb{E}\Big(|x(t,x_0)-x(t, y_0)|^p \Big) \leq \mathbb{E}\Big(|x_0-y_0|^p \Big)e^{-M_1t},
    \end{equation*}
    where $M_1$ is a positive constant and $(x_0, y_0) \in K \times K$.
\item[(iii)] In the finite time interval $[0, T]$, given a positive integer $n$, let $h=T/n$, then for some $p > 0$ consistent with $(ii)$ and any $j= 1, 2, \cdots , n$, the $p$th moment of the error of the numerical solution satisfies
\begin{equation*}
          \begin{aligned}
                 \mathbb{E}\Big(|x(jh) - X_{j}|^p \Big)
               \leq  C_T \Psi\Big(\mathbb{E}(|x(0)|^{r_1})\Big) h^q,
          \end{aligned}
\end{equation*}
where $C_T$ is a constant depends on T,  $q$ is a positive constant, $r_1$ is a nonnegative constant, and $\Psi (\cdot)$ is a continuous function.

\item[(iv)] The numerical solution is uniformly moment bounded, i.e., for some $r_1 > 0$ and any $k \in \mathbb{N} $
\begin{equation*}
   \mathbb{E}\Big(|X^{x_0}_k|^{r_1 }\Big) \leq M_2.
\end{equation*}
where $r_1$ is consistent with $(iii)$ and $M_2$ is a positive constant depend on $r_1$ and $x_0$.
\end{enumerate}
  \end{condition}

  \begin{theorem}\label{T22}
Suppose Condition \ref{as21} holds, then the numerical solution $\{X_k\}_{k \geq 0}$ is converges uniformly to the underlying solution of \eqref{SDE}, which means that for any fixed $h > 0$
 \begin{displaymath}
      \sup \limits_{j \in \mathbb{N}} \mathbb{E}|x(jh, x_0)-X_j^{x_0}|^p \leq C h^{q}
 \end{displaymath}
 where $C$ is a positive constant.
\end{theorem}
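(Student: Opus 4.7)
The plan is a block iteration. I split $[0,\infty)$ into blocks of length $T$, with $T=Nh$ and $N$ a positive integer chosen below. The finite-time bound from (iii) will control the error inside each block, while the contractive property (ii) will prevent the error from accumulating across blocks.

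Fix $h>0$ and write $e_m := \mathbb{E}|x(mT,x_0) - X_{mN}^{x_0}|^p$, so $e_0 = 0$. Introduce the auxiliary process $\tilde{x}^{(m)}(t) := x(t, X_{mN}^{x_0})$ for $t \geq mT$, namely the exact SDE restarted at time $mT$ from the current numerical state. Using the subadditive/convex triangle inequality (for $p \in (0,1]$, $|a+b|^p \leq |a|^p + |b|^p$; for $p \geq 1$, $|a+b|^p \leq 2^{p-1}(|a|^p+|b|^p)$; denote the relevant constant by $c_p$), decompose
\begin{equation*}
e_{m+1} \leq c_p\,\mathbb{E}|x((m+1)T,x_0) - \tilde{x}^{(m)}((m+1)T)|^p + c_p\,\mathbb{E}|\tilde{x}^{(m)}((m+1)T) - X_{(m+1)N}^{x_0}|^p.
\end{equation*}
Conditioning on $\mathcal{F}_{mT}$, the Markov property of the SDE together with (ii) bounds the first term by $c_p e^{-M_1 T} e_m$. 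For the second, the time-homogeneous Markov property of the numerical scheme (part (i)) lets us apply (iii) to the fresh block $[mT,(m+1)T]$ with deterministic (conditional) initial value $X_{mN}^{x_0}$; taking expectations and using (iv) together with the continuity of $\Psi$ gives a bound of $c_p C_T \Psi(M_2) h^q$.

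Now choose $T$ large enough that $\lambda := c_p e^{-M_1 T} < 1$. The recursion $e_{m+1} \leq \lambda e_m + A h^q$ with $A := c_p C_T \Psi(M_2)$ and $e_0 = 0$ gives inductively $e_m \leq A h^q / (1 - \lambda)$, uniformly in $m$. A general index $j = mN + l$ with $0 \leq l < N$ is handled by repeating the same two-term decomposition on the truncated block $[mT, mT + lh]$: the first term is again bounded by $c_p e^{-M_1 lh} e_m$ (hence by the uniform bound just obtained) and the second by $c_p C_{T} \Psi(M_2) h^q$ via (iii) on an interval of length at most $T$. Since $T$, $C_T$, $M_2$, $\lambda$ are all independent of $m$ and $j$, this produces a single constant $C$ with $\sup_j \mathbb{E}|x(jh,x_0) - X_j^{x_0}|^p \leq C h^q$.

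The main obstacle is applying (ii) when the ``initial values'' at time $mT$ are the random variables $x(mT,x_0)$ and $X_{mN}^{x_0}$ rather than deterministic points of a fixed compact $K$. The cleanest route is to use (ii) in its natural conditional form via the Markov property, so that taking expectations and the tower property produces the $e^{-M_1 T} e_m$ bound; this implicitly asks that $M_1$ be uniform in $K$. If instead $M_1$ is genuinely $K$-dependent, one localizes on $\{|x(mT,x_0)| \vee |X_{mN}^{x_0}| \leq R\}$, applies (ii) on this set, and controls the remainder by Markov's inequality using the uniform moment bound (iv) and the corresponding uniform bound for the SDE; optimizing $R$ at the end costs at most a logarithmic factor and does not alter the qualitative structure of the argument.
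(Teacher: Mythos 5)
Your proposal is correct and follows essentially the same route as the paper: the same restarted exact process $x(\cdot,X_{mN}^{x_0})$ as the auxiliary comparison, the same two-term decomposition per block using (ii) for the first term and (i)+(iii)+(iv) for the second, and the same choice of $T$ making the per-block contraction factor less than one, with your recursion $e_{m+1}\leq\lambda e_m+Ah^q$ being just a tidier packaging of the paper's explicit geometric series. Your closing remark about condition (ii) being stated only for deterministic initial values in a compact set is a legitimate gap that the paper's own proof also glosses over, and your conditioning/localization fix is a reasonable way to patch it.
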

\begin{proof}
    First, choose a fixed $T>p\log2/c_1$. For any $i \in \mathbb{N}$, let $\overline{x(t,X_{in}^{x_0} )}$ denote a new underlying solution of \eqref{SDE} with the initial data  $X_{in}^{x_0}$. Then by the third and fourth assumptions of Condition \ref{as21},  we know that $\Psi$ is a continuous function with a finite domain, so that for any $x \in [0, M_2]$
\begin{equation}\label{T22-2}
    \Psi(x)  \leq M_3,
\end{equation}
where $M_3$ is a positive constant.
This implies that for any $jh \in [0,T]$
\begin{equation} \label{T22-3}
    \mathbb{E}|x(jh,x_0) - X_{j}^{x_0}|^p
            \leq  C_T \Psi\Big(\mathbb{E}(|x_0|^{r_1}) \Big) h^q \leq C_T M_3 h^q,
\end{equation}
 As for any $T+jh \in [T,2T]$, by the elementary inequality
\begin{equation}\label{ELE}
    (a+b)^p \leq (2(a\vee b))^p \leq 2^p (a^p+b^p),
\end{equation}
we derive that
\begin{equation} \label{T22-4}
    \begin{aligned}
        &\quad\mathbb{E}\Big(\Big|x(T+jh,x_0) - X_{n+j}^{x_0}\Big|^p\Big)  \\
        &\leq \mathbb{E}\Big(\Big|x(T+jh,x_0) -\overline{x(jh,X_{n}^{x_0})} + \overline{x(jh,X_{n}^{x_0})} - X_{n+j}^{x_0}\Big|^p\Big)   \\
        &\leq 2^p \mathbb{E}\Big(\Big|x(T+jh,x_0) -\overline{x(jh,X_{n}^{x_0})}\Big|^p\Big) + 2^p \mathbb{E}\Big(\Big|\overline{x(jh,X_{n}^{x_0})} - X_{n+j}^{x_0}\Big|^p\Big).
    \end{aligned}
\end{equation}
Since the underlying solution is time-homogeneous, then by \eqref{T22-3} and the second assumption of Condition \ref{as21}, we have
\begin{equation}\label{T22-5}
    \begin{aligned}
         \mathbb{E}\Big(\Big|x(T+jh,x_0) -\overline{x(jh,X_{n}^{x_0})}\Big|^p\Big)
         &=\mathbb{E}\Big(\Big|x(jh,x(T,x_0)) -\overline{x(jh,X_{n}^{x_0})}\Big|^p\Big) \\
        &\leq  \mathbb{E}\Big(\Big|x(T,x_0) - X_{n}^{x_0}\Big|^p\Big) e^{-M_1 jh} \\
        &\leq  e^{-M_1jh}C_T M_3 h^q.
    \end{aligned}
\end{equation}
Similarly, by the first and third assumptions of Condition \ref{as21} as well as \eqref{T22-2}, we get
\begin{equation}\label{T22-6}
    \begin{aligned}
        \mathbb{E}\Big(\Big|\overline{x(jh,X_{n}^{x_0})} - X_{n+j}^{x_0}\Big|^p\Big)
        &= \mathbb{E}\Big(\Big|\overline{x(jh,X_{n}^{x_0})} - X_{j}^{X_{n}^{x_0}}\Big|^p\Big)\\
        &\leq C_T \Psi\Big(\mathbb{E}(|X_{n}^{x_0}|^{r_1}) \Big) h^q  \\
        &\leq C_T M_3 h^q.
    \end{aligned}
\end{equation}
Inserting \eqref{T22-5} and \eqref{T22-6} into \eqref{T22-4} yields
\begin{equation}\label{T22-7}
    \begin{aligned}
        \mathbb{E}\Big(\Big|x(T+jh,x_0) - X_{n+j}^{x_0}\Big|^p\Big)
         \leq 2^p e^{-M_1jh}C_T M_3 h^q + 2^p C_T M_3 h^q.
    \end{aligned}
\end{equation}
Next,  since $T>p\log2/c_1$, let
\begin{equation*}
    \gamma :=2^p e^{-M_1T} < 1.
\end{equation*}
Continuing this approach, for any $2T+jh \in [2T, 3T]$
\begin{equation}\label{T22-8}
    \begin{aligned}
        &\quad\mathbb{E}\Big(\Big|x(2T+jh,x_0) - X_{2n+j}^{x_0}\Big|^p\Big)  \\
        &\leq 2^p \mathbb{E}\Big(\Big|x(2T+jh,x_0) -\overline{x(jh,X_{2n}^{x_0})}\Big|^p\Big) + 2^p \mathbb{E}\Big(\Big|\overline{x(jh,X_{2n}^{x_0})} - X_{2n+j}^{x_0}\Big|^p\Big).
    \end{aligned}
\end{equation}
By \eqref{T22-7} and the second assumption of Condition \ref{as21}
\begin{equation}\label{T22-9}
    \begin{aligned}
     \mathbb{E}\Big(\Big|x(2T+jh,x_0) -\overline{x(jh,X_{2n}^{x_0})}\Big|^p\Big)
       &\leq  \mathbb{E}\Big(\Big|x(2T,x_0) - X_{2n}^{x_0}\Big|^p\Big) e^{-M_1 jh} \\
        &\leq  e^{-M_1jh}\Big(\gamma C_T M_3 h^q + 2^p C_T M_3 h^q\Big).
    \end{aligned}
\end{equation}
By the first and third assumptions of Condition \ref{as21} as well as \eqref{T22-2} again, we have
\begin{equation} \label{T22-10}
    \begin{aligned}
        \mathbb{E}\Big(\Big|\overline{x(jh,X_{2n}^{x_0})} - X_{2n+j}^{x_0}\Big|^p\Big) &\leq C_T \Psi\Big(\mathbb{E}(|X_{2n}^{x_0}|^{r_1}) \Big) h^q  \\
        &\leq C_T M_3 h^q.
    \end{aligned}
\end{equation}
Inserting \eqref{T22-9} and \eqref{T22-10} into \eqref{T22-8}, we get
\begin{equation*}
    \begin{aligned}
        \mathbb{E}\Big(\Big|x(2T+jh,x_0) - X_{2n+j}^{x_0}\Big|^p\Big)
        \leq 2^p  e^{-M_1jh}\Big(\gamma C_T M_3 h^q + 2^p C_T M_3 h^q\Big) + 2^p C_T M_3 h^q.
    \end{aligned}
\end{equation*}
Similarly, for any $3T+jh \in [3T,4T]$, we have
\begin{align*}
         \mathbb{E}\Big(\Big|x(3T+jh,x_0) - X_{3n+j}^{x_0}\Big|^p\Big)
       \leq 2^p  e^{-M_1jh}\Big(\gamma^2 C_T M_3 h^q + \gamma 2^p C_T M_3 h^q + 2^p C_T M_3 h^q\Big)
        + 2^p C_T M_3 h^q.
\end{align*}
Now, we assume that for any $(m-1)T + jh \in [(m-1)T, mT]$, the following inequality holds
    \begin{align}\label{T22-11}
        &\quad  \mathbb{E}\Big(\Big|x((m-1)T+jh,x_0) - X_{(m-1)n+j}^{x_0}\Big|^p\Big) \\ \notag
        &\leq 2^p  e^{-M_1jh}\Big(\gamma^{m-2} C_T M_3 h^q +  2^p C_T M_3 h^q \times  \sum \limits_{i = 0}^{m-3} \gamma ^i\Big) + 2^p C_T M_3 h^q .
    \end{align}
Then for any $mT+jh \in [mT, (m+1)T]$,
    \begin{align}\label{T22-12}
         &\quad\mathbb{E}\Big(\Big|x(mT+jh,x_0) - X_{mn+j}^{x_0}\Big|^p\Big)  \\ \notag
        &\leq 2^p \mathbb{E}\Big(\Big|x(mT+jh,x_0) -\overline{x(jh,X_{mn}^{x_0})}\Big|^p\Big) + 2^p \mathbb{E}\Big(\Big|\overline{x(jh,X_{mn}^{x_0})} - X_{mn+j}^{x_0}\Big|^p\Big).
    \end{align}
By \eqref{T22-11} and Condition \ref{as21} as well as \eqref{T22-2}, we obtain
   \begin{align}\label{T22-13}
          &\quad\mathbb{E}\Big(\Big|x(mT+jh,x_0) -\overline{x(jh,X_{mn}^{x_0})}\Big|^p\Big) \\ \notag
          &=\mathbb{E}\Big(\Big|x(jh,x(mT,x_0)) -\overline{x(jh,X_{mn}^{x_0})}\Big|^p\Big) \\ \notag
        &\leq  \mathbb{E}\Big(\Big|x(mT,x_0) - X_{mn}^{x_0}\Big|^p\Big) e^{-M_1 jh} \\ \notag
        &\leq  e^{-M_1jh}\Big(\gamma^{m-1} C_T M_3 h^q +  2^p C_T M_3 h^q \times  \sum \limits_{i = 0}^{m-2} \gamma ^i\Big) , \notag
   \end{align}
and
    \begin{align}\label{T22-14}
         \mathbb{E}\Big(\Big|\overline{x(jh,X_{mn}^{x_0})} - X_{mn+j}^{x_0}\Big|^p\Big)
      &\leq C_T \Psi\Big(\mathbb{E}(|X_{mn}^{x_0}|^{r_1}) \Big) h^q  \\ \notag
        &\leq C_T M_3 h^q.
    \end{align}
Inserting \eqref{T22-13} and \eqref{T22-14} into \eqref{T22-12}, we have
\begin{equation*}
    \begin{aligned}
         &\quad\mathbb{E}\Big(\Big|x(mT+jh,x_0) - X_{mn+j}^{x_0}\Big|^p\Big)  \\
         &\leq 2^p  e^{-M_1jh}\Big(\gamma^{m-1} C_T M_3 h^q +  2^p C_T M_3 h^q \times  \sum \limits_{i = 0}^{m-2} \gamma ^i\Big) + 2^p C_T M_3 h^q.
         \end{aligned}
         \end{equation*}
         Since $ 0<\gamma<1 $, there exists a positive constant upper bound $M_4$ for the series $\sum \limits_{i = 0}^{\infty} \gamma ^i$. Then for any $m, k \in \mathbb{N}$, let
         \begin{equation*}
             C := 2^pC_T M_3 +2^{2p} C_T M_3 M_4 + 2^p C_T M_3.
         \end{equation*}
         The desired assertion follows.
\end{proof}
\begin{remark}
\begin{enumerate}
\item[(1)] The uniform convergence rate is consistent with the finite-time convergence rate.
\item[(2)] If the second assumption of Condition \ref{as21} is replaced by the  global attractivity of the numerical solution, \eqref{T22} still holds.
\end{enumerate}
\end{remark}
Next, we will discuss the connection between the uniform convergence and the stationary distribution.

Before we proceed, let us introduce some necessary notions about the stationary distribution. For any $x \in \mathbb{R}^d$ and any Borel set $\mathit{B} \subset \mathbb{R}^d$, the transition probability kernel of the underlying solution $x(t)$ and the numerical solution $X_k$ with initial value $x(0)=X_0=x_0$ is defined as
\begin{equation*}
    {\overline{\mathbb{P}}}_t(x_0,\mathit{B}) := \mathbb{P}(x(t) \in \mathit{B}|x(0) = x_0) \quad and  \quad \mathbb{P}_k(x_0,\mathit{B}) := \mathbb{P}(X_k \in \mathit{B}|X_0 = x_0).
\end{equation*}

Denote the family of all probability measures on $\mathbb{R}^d$ by  $\mathcal{P}(\mathbb{R}^d)$. Define by $\mathbb{L}$ the family of mappings $\mathit{F}$ : $\mathbb{R}^d \to \mathbb{R}$  satisfying
\begin{equation*}
    |\mathit{F}(x) - \mathit{F}(y)| \leq |x-y| \quad and \quad |\mathit{F}(x)| \leq 1,
\end{equation*}
for any $x,y \in \mathbb{R}^d$. For $\mathbb{P}_1, \mathbb{P}_2 \in \mathcal{P}(\mathbb{R}^d)$, define metric $\mathit{d}_{\mathbb{L}}$ by
\begin{equation*}
    \mathit{d}_{\mathbb{L}}(\mathbb{P}_1 , \mathbb{P}_2) = \sup \limits_{\mathit{F} \in \mathbb{L}} \left| \int_{\mathbb{R}^d} \mathit{F}(x) \mathbb{P}_1(dx) - \int_{\mathbb{R}^d} \mathit{F}(x)\mathbb{P}_2(dx) \right|.
\end{equation*}
The weak convergence of probability measures can be illustrated in terms of metric $\mathit{d}_{\mathbb{L}}$ \cite{IW1989}. That is, a sequence of probability measures $\{\mathbb{P}_k\}_{k \geq 1}$ in $\mathcal{P}(\mathbb{R}^d)$ converge weakly to a probability measure $\mathbb{P} \in \mathcal{P}(\mathbb{R}^d)$ if and only if
\begin{equation*}
    \lim \limits_{ k \to \infty} \mathit{d}_{\mathbb{L}}(\mathbb{P}_k, \mathbb{P}) = 0.
\end{equation*}
Then we define the stationary distribution for the underlying solution of \eqref{SDE} by using the concept of weak convergence.

\begin{definition}
    For any initial value $x \in \mathbb{R}^d$, the underlying solution of \eqref{SDE} is said to have a stationary distribution $\pi \in \mathcal{P}(\mathbb{R}^d)$ if the transition probability measure ${\overline{\mathbb{P}}}_t(x,\cdot)$ converges weakly to $\pi( \cdot)$ as $t \to \infty$ for every $x \in \mathbb{R}^d$, that is
    \begin{equation*}
        \lim \limits_{k \to \infty} \left(\sup \limits_{\mathit{F} \in \mathbb{L}} \left|\mathbb{E}(\mathit{F}(x(t)))-\mathbb{E}_{\pi}(\mathit{F})\right|\right) = 0,
    \end{equation*}
    where
    \begin{equation*}
        \mathbb{E}_{\pi}(\mathit{F}) = \int_{\mathbb{R}^d} \mathit{F}(y)\pi(dy).
    \end{equation*}
\end{definition}
If we add an additional condition.
\begin{condition}\label{as22}
    The underlying solution is uniformly moment bounded, i.e., for any $t \geq 0 $ and some $p > 0$ consistent with the second assumption of Condition \ref{as21}
\begin{equation*}
   \mathbb{E}\Big(|x(t,x_0)|^{p}\Big) \leq M_5,
\end{equation*}
where $M_5$ is a positive constant depend on $p$ and $x_0$.
\end{condition}

Then, from Theorem 3.1 in \cite{YM2003}, we know that the solution of \eqref{SDE} has a unique stationary distribution denoted by $\pi (\cdot)$ under Conditions \ref{as21} and \ref{as22}.

Thus, we have the following theorem.
\begin{theorem}\label{T25}
Suppose Conditions \ref{as21} and \ref{as22} hold.
   Then, the probability measure of the numerical solution converges to the stationary distribution of the underlying solution, that is
   \begin{equation*}
       \lim \limits_{ jh \to \infty \atop h \to 0} \mathit{d}_{\mathbb{L}}(\mathbb{P}_j(x_0,\cdot), \pi(\cdot)) = 0.
   \end{equation*}
\end{theorem}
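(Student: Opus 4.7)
The plan is to combine the uniform strong convergence from Theorem \ref{T22} with the convergence of the underlying transition probability to $\pi$, via a standard triangle-inequality splitting in the $d_{\mathbb{L}}$ metric. Specifically, for any $F \in \mathbb{L}$ I would write
\begin{equation*}
\big|\mathbb{E}F(X_j^{x_0}) - \mathbb{E}_{\pi}(F)\big|
\leq \big|\mathbb{E}F(X_j^{x_0}) - \mathbb{E}F(x(jh,x_0))\big|
+ \big|\mathbb{E}F(x(jh,x_0)) - \mathbb{E}_{\pi}(F)\big|,
\end{equation*}
take the supremum over $F \in \mathbb{L}$ on both sides to obtain $d_{\mathbb{L}}(\mathbb{P}_j(x_0,\cdot),\pi(\cdot))$ on the left, and then control the two terms on the right independently, one by letting $h \to 0$ and the other by letting $jh \to \infty$.

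For the second term, Conditions \ref{as21} and \ref{as22} combined with the cited result from \cite{YM2003} give the existence of the unique stationary distribution $\pi$ with $\overline{\mathbb{P}}_t(x_0,\cdot) \Rightarrow \pi$, so $\sup_{F\in\mathbb{L}}|\mathbb{E}F(x(jh,x_0)) - \mathbb{E}_{\pi}(F)| = d_{\mathbb{L}}(\overline{\mathbb{P}}_{jh}(x_0,\cdot),\pi(\cdot)) \to 0$ as $jh \to \infty$. This piece requires no new work beyond quoting the definition of weak convergence through $d_{\mathbb{L}}$ already set up in the excerpt.

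For the first term, I would exploit that every $F \in \mathbb{L}$ is simultaneously $1$-Lipschitz and bounded by $1$, so $|F(a) - F(b)| \le \min(|a-b|,\,2)$. The awkward point is that Theorem \ref{T22} provides a bound only on the $p$-th moment of $|x(jh,x_0) - X_j^{x_0}|$ for some $p$ that may be in $(0,1)$, and one cannot directly pass from $L^p$-control to $L^1$-control by Jensen in that regime. The trick is to use the truncation: an elementary check shows $\min(r,2) \le 2^{1-p} r^{p}$ for all $r \ge 0$ when $p \in (0,1]$, while for $p \geq 1$ Jensen suffices. Hence
\begin{equation*}
\big|\mathbb{E}F(X_j^{x_0}) - \mathbb{E}F(x(jh,x_0))\big|
\leq C_p \, \big(\mathbb{E}|x(jh,x_0) - X_j^{x_0}|^{p}\big)^{1 \wedge (1/p)}
\leq C_p\, (C h^{q})^{1\wedge(1/p)}
\end{equation*}
uniformly in $j$, by Theorem \ref{T22}. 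Taking suprema over $F \in \mathbb{L}$ gives $d_{\mathbb{L}}(\mathbb{P}_j(x_0,\cdot), \overline{\mathbb{P}}_{jh}(x_0,\cdot)) \to 0$ as $h \to 0$, uniformly in $j$.

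The main technical hurdle is precisely the passage from the $L^p$-convergence for possibly small $p$ to weak convergence of measures; once the comparison $\min(r,2)\le 2^{1-p}r^p$ is in hand, everything else is standard triangle-inequality bookkeeping. Combining the two estimates and letting $jh \to \infty$ together with $h \to 0$ yields the desired conclusion, with the uniform-in-$j$ nature of the first bound ensuring that the order of the limits is immaterial.
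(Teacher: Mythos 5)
Your proposal is correct and follows essentially the same route as the paper's proof: a triangle-inequality split of $d_{\mathbb{L}}(\mathbb{P}_j(x_0,\cdot),\pi(\cdot))$ through $\overline{\mathbb{P}}_{jh}(x_0,\cdot)$, with the second piece handled by the weak convergence to $\pi$ and the first piece handled by bounding $|F(a)-F(b)|\le 2\wedge|a-b|$ and converting the uniform $L^p$ error of Theorem \ref{T22} into this quantity (Jensen for $p\ge 1$, a truncation comparison $2\wedge r\le 2^{1-p}r^p$ for $p\in(0,1)$, exactly the paper's case split in \eqref{T25-16}--\eqref{T25-17}). Your constant $2^{1-p}$ is marginally sharper than the paper's $2^{2-p}$, but the argument is the same.
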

\begin{proof}
    First, since the underlying solution of \eqref{SDE} has a unique stationary distribution $\pi$, this means that for any $\epsilon >0$, there exists an $T_1 >0$ such that for any $jh \geq T_1$
    \begin{equation*}
        \mathit{d}_{\mathbb{L}}(\overline{\mathbb{P}}_{jh}(x_0, \cdot), \pi(\cdot)) \leq \frac{\epsilon}{2}.
    \end{equation*}
    Next, from the definition of $\mathbb{L}$, for any $F \in \mathbb{L}$, we can get
    \begin{equation}\label{T25-15}
        |\mathbb{E}(F(x(jh))) - \mathbb{E}(F(X_j))| \leq \mathbb{E}(2 \wedge |x(jh) - X_j|).
    \end{equation}
    Since the numerical solution satisfies \eqref{T22}, choose $h$ sufficiently small such that $Ch^q \leq \min\{\frac{\epsilon}{8}, \frac{\epsilon^p}{2^p}\}$. Then, if $p \geq 1$, we have
    \begin{equation}\label{T25-16}
        \mathbb{E}(2 \wedge |x(jh) - X_j|) \leq \mathbb{E}(|x(jh) - X_j|)\leq (\mathbb{E} |x(jh) - X_j|^p)^{\frac{1}{p}} \leq \frac{\epsilon}{2},
    \end{equation}
    and if $p \in (0, 1)$, we have
    \begin{equation}\label{T25-17}
       \begin{aligned}
            \mathbb{E}(2 \wedge |x(jh) - X_j|) &\leq 2\mathbb{P}(|x(jh) - X_j| \geq 2) + \mathbb{E}(I_{\{|x(jh) - X_j| <2\}}|x(jh) - X_j|) \\
            &\leq 2^{1-p}\mathbb{E} |x(jh) - X_j|^p + \mathbb{E}(2^{1-p} |x(jh) - X_j|^p) \\
            & \leq 2^{2-p} \mathbb{E} |x(jh) - X_j|^p \\
            &\leq \frac{\epsilon}{2}.
       \end{aligned}
    \end{equation}
    Hence, it follows from \eqref{T25-15}, \eqref{T25-16}, \eqref{T25-17} that
    \begin{equation*}
    |\mathbb{E}(F(x(jh))) - \mathbb{E}(F(X_j))| \leq \frac{\epsilon}{2}.
    \end{equation*}
   Consequently, it is obvious that
    \begin{equation*}
         \sup \limits_{\mathit{F} \in \mathbb{L}}|\mathbb{E}(F(x(jh))) - \mathbb{E}(F(X_j))| \leq \frac{\epsilon}{2},
    \end{equation*}
    that is
    \begin{equation*}
        \mathit{d}_{\mathbb{L}}(\mathbb{P}_j(x_0,\cdot), \overline{\mathbb{P}}_{jh}(x_0, \cdot)) \leq \frac{\epsilon}{2}.
    \end{equation*}
   Then, the triangle inequality yields
   \begin{equation*}
       \mathit{d}_{\mathbb{L}}(\mathbb{P}_j(x_0,\cdot), \pi(\cdot~) \leq \epsilon.
   \end{equation*}
   The proof is hence completed.
\end{proof}

\section{The BEM method}
In this section, the BEM method is used as an example. Under some conditions that are weaker than the existing results, we not only obtain the uniform convergence of the BEM method but also prove that it can be used to numerically approximate the stationary distribution of the underlying solution. Now we make the following assumptions.
\begin{assumption}\label{as31}
     There exists a pair of constants $q \in [1,\infty)$ and $L_1 \in (0,\infty)$ such that
     \begin{equation}\label{S3-1}
         |f(x_1)-f(x_2)| \leq L_1(1+|x_1|^{q-1}+|x_2|^{q-1})|x_1-x_2|
     \end{equation}
     for all $x_1,x_2 \in \mathbb{R}^d$.
  \end{assumption}

    \begin{assumption}\label{as32}
      There exists  $c_1, c_2, c_3\in (0,\infty)$ and $l_1 \geq \max\{2q, 3\}, l_2 \geq 3 $ such that
      \begin{equation}\label{S3-2}
     2\langle x,f(x) \rangle +l_1|g(x)|^2 \leq c_1|x|^2 + c_2
 \end{equation}
        \begin{equation}\label{S3-3}
            \begin{aligned}
   2\langle x_1-x_2,f(x_1)-f(x_2) \rangle + l_2|g(x_1)-g(x_2)|^2\leq c_3|x_1-x_2|^2
\end{aligned}
        \end{equation}
for all $x, x_1, x_2 \in \mathbb{R}^d$.
    \end{assumption}

 From Assumptions \ref{as31} and \ref{as32}, we can get the following result \cite{AK2017}:
 \begin{equation}\label{S3-4}
     |g(x_1)-g(x_2)| \leq L_2(1+|x_1|^{q-1}+|x_2|^{q-1})|x_1-x_2|
 \end{equation}
 for all $x_1, x_2 \in \mathbb{R}^d$, where $L_2$ is a positive constant depends on $L_1$ and $l_2$.
 From \eqref{S3-1} and \eqref{S3-4}, we further deduce the following polynomial growth bound
 \begin{equation}\label{LM3-5}
     |f(x)| \vee |g(x)| \leq  L_3 (1 + |x|^q),
 \end{equation}
 where $L_3$ is a positive constant depends on $L_1, L_2, f(0)$ and $g(0)$.
 This means that under the above assumptions, the solution of \eqref{SDE} is uniquely determined \cite{M2007}.
 The BEM method applied to \eqref{SDE} produces approximations $X_k \approx x(kh) $ by setting $X_0=x(0)=x_0$ and forming
 \begin{equation} \label{DASDE}
     X_k=X_{k-1}+f(X_k)h+g(X_{k-1}) \Delta W_{k-1},
 \end{equation}
 where $h > 0$ is the timestep and $\Delta W_{k-1} :=W(kh)-W((k-1)h)$ is the Brownian increment.

 We point out that the BEM method \eqref{DASDE} is well-defined under  \eqref{as32} (see, e.g., \cite{CG2020}). And following the same argument of Theorem 2.7 in \cite{LM2015}, we get the following result.
\begin{lemma}\label{L33}
    The BEM method \eqref{DASDE} is a homogeneous Markov process.
\end{lemma}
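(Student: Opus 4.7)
The plan is to recast the implicit recursion \eqref{DASDE} as an explicit deterministic, measurable function of the previous state and a single Brownian increment, after which both the Markov property and time-homogeneity fall out immediately. First I would rewrite \eqref{DASDE} as $X_k - hf(X_k) = X_{k-1} + g(X_{k-1})\Delta W_{k-1}$ and observe that the one-sided Lipschitz bound \eqref{S3-3} yields
\[
\langle x_1 - x_2,\; (x_1 - hf(x_1)) - (x_2 - hf(x_2))\rangle \geq (1 - hc_3/2)|x_1 - x_2|^2
\]
for all $x_1, x_2 \in \mathbb{R}^d$. Hence for $h < 2/c_3$ the map $y \mapsto y - hf(y)$ is strictly monotone and coercive on $\mathbb{R}^d$, so it is a homeomorphism of $\mathbb{R}^d$ onto itself by a Browder--Minty argument (or a direct contraction/fixed-point argument). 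Let $F_h$ denote its continuous, and hence Borel measurable, inverse. This gives the representation
\[
X_k = F_h\bigl(X_{k-1} + g(X_{k-1})\Delta W_{k-1}\bigr) =: \Phi_h(X_{k-1}, \Delta W_{k-1}),
\]
where $\Phi_h: \mathbb{R}^d \times \mathbb{R} \to \mathbb{R}^d$ is a Borel measurable function that does not depend on the index $k$.

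With this representation both claims follow by a standard argument. For the Markov property, $X_{k-1}$ is $\mathcal{F}_{(k-1)h}$-measurable while $\Delta W_{k-1}$ is independent of $\mathcal{F}_{(k-1)h}$, so for any bounded Borel $\psi$,
\[
\mathbb{E}\bigl[\psi(X_k) \,\big|\, \mathcal{F}_{(k-1)h}\bigr] = \mathbb{E}\bigl[\psi(\Phi_h(y, \Delta W_{k-1}))\bigr]\Big|_{y = X_{k-1}},
\]
which depends only on $X_{k-1}$. For time-homogeneity, the right-hand side depends on $k$ only through the law of $\Delta W_{k-1}$, which is $N(0,h)$ for every $k$, so the one-step transition kernel is the same for all $k$.

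The main obstacle is the invertibility and Borel measurability of $y \mapsto y - hf(y)$, but \eqref{S3-3} supplies exactly the strict monotonicity needed. Once this step is carried out, the rest is identical to the argument of Theorem~2.7 in \cite{LM2015}, and in the final write-up I would simply invoke that reference after noting that the role played there by the one-sided Lipschitz constant is played here by $c_3$ from \eqref{S3-3}.
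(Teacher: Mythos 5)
Your proposal is correct and follows essentially the same route as the paper, which simply invokes the argument of Theorem~2.7 in \cite{LM2015}: represent the implicit step as $X_k=F_h\bigl(X_{k-1}+g(X_{k-1})\Delta W_{k-1}\bigr)$ with $F_h$ the (Borel measurable) inverse of the strongly monotone map $y\mapsto y-hf(y)$ guaranteed by \eqref{S3-3} for $h<2/c_3$, and then use independence and stationarity of the Brownian increments. Your write-up just makes explicit the details the paper leaves to the citation.
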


    \subsection{The uniform moment boundedness.}
    In this subsection, for proof, we need some additional assumptions on diffusion coefficient $g$.
    \begin{assumption} \label{as33}
    There  exist three constants $\alpha > 0$, $D > 0$, and $C_3 \geq 0$ such that for any $x\in \mathbb{R}^d$
    \begin{equation*}
         \frac{(1-l_1)|g(x)|^2}{D+|x|^2+\alpha|g(x)|^2}-\frac{2|\langle x,g(x) \rangle|^2}{(D+|x|^2+\alpha|g(x)|^2)^2} \leq k_1 + \frac{P(x)}{(D+|x|^2+\alpha|g(x)|^2)^2} ,
    \end{equation*}
    where $k_1$ is a constant with $k_1+c_1 < 0$ and $P(x)$ is a polynomial of $x$ that satisfies the following condition for a sufficiently small constant $p$
    \begin{equation}
        \sup \limits_{x\in \mathbb{R}^d}\left| \frac{P(x)}{(D+|x|^2+\alpha|g(x)|^2)^{2-\frac{p}{2}}}\right| \leq C_3.
    \end{equation}

\end{assumption}

\begin{assumption} \label{as34}
    There exist a  constant $\beta > 0$ such that for any $x$, $y \in \mathbb{R}^d $
    \begin{equation}
    \frac{(1-l_2)|g(x)-g(y)|^2}{|x-y|^2+\beta|g(x)-g(y)|^2}-\frac{2 |\langle x-y,g(x)-g(y) \rangle|^2}{(|x-y|^2+\beta|g(x)-g(y)|^2)^2} \leq k_2,
\end{equation}
where  $k_2$ is a constant with $k_2+c_3 < 0$ and $x \neq y$.
\end{assumption}
\begin{remark}
We emphasize that the family of drift and diffusion functions that satisfy \eqref{as31} - \eqref{as34} is large. For example, for any polynomial $g(x) = a_0 + \sum \limits_{i=0}^n a_{i} x^{2i+1}$ with $a_i > 0$ and $n \geq 0$, if we choose $\alpha $ and $\beta$ sufficiently small, then $k_1$ and $k_2$ will be very close to $-a_1^2 (l_1 + 1)$ and $- a_1^2 (l_2 + 1)$, and $P (x)$ is usually a polynomial of degree less than $(D+|x|^2+\alpha|g(x)|^2)^{2-\frac{p}{2}}$. We can choose $c_1$ and $c_3$ that are less than $a_1^2 (l_1 + 1)$ and $ a_1^2 (l_2 + 1)$. Then for this $g(x)$, it is not difficult to see that \eqref{as33} and \eqref{as34} are satisfied and there are many  $f(x)$ satisfying Assumptions \ref{as31} and \ref{as32}.
\end{remark}

 \begin{lemma}\label{LM310}
    Suppose \eqref{as32} and \eqref{as33} hold, then there exists a pair of constants $(p^*, h^*)$ with $p^* \in (0, 1)$ and $h \in (0, 1)$ such that for any $p \in (0, p^*]$, $h \in (0, h^*]$ and $D \in (0,\infty)$, the BEM solution \eqref{DASDE} satisfies
 \begin{equation*}
     \mathbb{E}|X_k|^p \leq \mathbb{E}\Big(D+|x_0|^2+l_1h|g(x_0)|^2 \Big)^{p/2}-\frac{4M}{p(k_1+c_1)},
 \end{equation*}
 where $M$ is a  positive constant depends on $D$, $c_2$ and $p$.
\end{lemma}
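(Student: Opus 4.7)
The plan is to introduce the Lyapunov-type functional $V(x) := D + |x|^2 + l_1 h |g(x)|^2$, establish a one-step contractive recursion of the form $\mathbb{E}[V(X_k)^{p/2}] \leq (1 + \beta h)\mathbb{E}[V(X_{k-1})^{p/2}] + Mh$ with some negative rate $\beta < 0$, sum the resulting geometric series, and conclude via the elementary inequality $|X_k|^p \leq V(X_k)^{p/2}$.

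The first step is to extract a pointwise estimate for $V(X_k)$ from \eqref{DASDE}. I would rewrite the BEM update as $X_k - h f(X_k) = X_{k-1} + g(X_{k-1})\Delta W_{k-1}$, take squared Euclidean norms on both sides, and use \eqref{S3-2} to bound $2\langle X_k, f(X_k)\rangle \leq c_1|X_k|^2 + c_2 - l_1|g(X_k)|^2$. After dropping the nonnegative $h^2|f(X_k)|^2$ term and adding $l_1 h|g(X_k)|^2 + D$ to both sides, one obtains
$$V(X_k) \leq V(X_{k-1}) + c_1 h|X_k|^2 + c_2 h - l_1 h|g(X_{k-1})|^2 + 2\langle X_{k-1}, g(X_{k-1})\rangle\Delta W_{k-1} + |g(X_{k-1})|^2(\Delta W_{k-1})^2.$$
The implicit $c_1 h|X_k|^2$ term would then be absorbed either by factoring $(1-c_1 h)^{-1}$ (valid for small $h$) or by the bound $|X_k|^2 \leq V(X_k)$, at the expense of slightly inflating the effective rate.

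Next, since $p/2 < 1$, the map $y\mapsto y^{p/2}$ is concave, so a second-order Taylor expansion around $V(X_{k-1})$ yields
$$V(X_k)^{p/2} \leq V(X_{k-1})^{p/2} + \tfrac{p}{2}V(X_{k-1})^{p/2-1}(V(X_k) - V(X_{k-1})) + \tfrac{p(p-2)}{8}V(\xi_k)^{p/2-2}(V(X_k) - V(X_{k-1}))^2,$$
for some intermediate $\xi_k$. Taking conditional expectation given $\mathcal{F}_{(k-1)h}$, the zero-mean Brownian increment drops out at first order but, when squared, contributes $4h|\langle X_{k-1}, g(X_{k-1})\rangle|^2$ to the quadratic remainder. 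Collecting the $O(h)$ terms and normalizing by $V(X_{k-1})^{p/2}$, the leading coefficient is precisely the left-hand side of Assumption \ref{as33}; invoking that assumption replaces it by $k_1 + P(X_{k-1})/V(X_{k-1})^2$, and the growth condition $|P(x)| \leq C_3 V(x)^{2-p/2}$ guarantees that the correction $\tfrac{p}{2}h V(X_{k-1})^{p/2-2}P(X_{k-1})$ is uniformly bounded by a constant multiple of $h$. Together with the $c_1 h|X_k|^2$ absorption and the $c_2 h$ contribution from \eqref{S3-2}, this produces the target recursion
$$\mathbb{E}[V(X_k)^{p/2}] \leq \Big(1 + \tfrac{p}{2}(k_1+c_1)h\Big)\mathbb{E}[V(X_{k-1})^{p/2}] + Mh$$
for some constant $M > 0$ depending on $D$, $c_2$, $p$, and $C_3$.

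Since $k_1 + c_1 < 0$, the multiplier lies in $(0,1)$ for sufficiently small $h$, and iterating down to $k = 0$ sums the geometric series to $\mathbb{E}[V(X_k)^{p/2}] \leq V(x_0)^{p/2} - 4M/[p(k_1+c_1)]$, from which the lemma follows by $|X_k|^p \leq V(X_k)^{p/2}$. The main obstacles I anticipate are twofold. First, the implicit $c_1 h|X_k|^2$ on the right of the pointwise inequality forces a delicate interplay between the Taylor remainder and the smallness of $h$, and the negativity of the Taylor quadratic term must be preserved after this absorption. Second, the restriction to sufficiently small $p$ is essential so that the Taylor second-order coefficient $p(p-2)/4$ matches the $-2$ appearing in \eqref{as33} up to corrections that can be re-absorbed, and so that the polynomial $P(x)$ is controlled via its $V^{2-p/2}$ growth bound. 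Finally, the specific choice of Lyapunov weight $\alpha = l_1 h$ in $V$ is dictated by the need for the $-l_1 h |g(X_{k-1})|^2$ dissipation coming from \eqref{S3-2} to line up exactly with the $(1-l_1)|g|^2/V$ term governed by \eqref{as33}.
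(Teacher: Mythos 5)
Your overall architecture --- the Lyapunov functional $V(x)=D+|x|^2+l_1h|g(x)|^2$, the one-step recursion $\mathbb{E}[V(X_k)^{p/2}]\leq(1+\tfrac{p}{4}(k_1+c_1)h)\mathbb{E}[V(X_{k-1})^{p/2}]+Mh$, and the geometric-series summation --- coincides with the paper's. The gap is in how you extract the second-order dissipation. You expand $y\mapsto y^{p/2}$ to second order with a Lagrange remainder $\tfrac{p(p-2)}{8}V(\xi_k)^{p/2-2}(V(X_k)-V(X_{k-1}))^2$ evaluated at a random intermediate point $\xi_k$. This term is negative, and you genuinely need its full strength: the contribution $-2|\langle x,g(x)\rangle|^2/(D+|x|^2+\alpha|g(x)|^2)^2$ in Assumption \ref{as33} is indispensable for achieving $k_1+c_1<0$ (for $g(x)=\sigma x$ the first term of Assumption \ref{as33} alone gives only about $-(l_1-1)\sigma^2$, whereas the paper's Ginzburg--Landau example requires the combined $-(l_1+1)\sigma^2$). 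But since the exponent $p/2-2$ is negative, on the event $V(X_k)>V(X_{k-1})$ --- which has positive probability at every step because the Gaussian increment is unbounded --- one has $V(\xi_k)^{p/2-2}<V(X_{k-1})^{p/2-2}$, so the remainder is \emph{less} negative than the quantity $\tfrac{p(p-2)}{8}V(X_{k-1})^{p/2-2}(\Delta V)^2$ you need, and the substitution $\xi_k\to X_{k-1}$ goes the wrong way. Taking conditional expectations therefore does not recover the $4h|\langle X_{k-1},g(X_{k-1})\rangle|^2$ term at the correct weight, and the claimed recursion does not follow from your expansion as written.

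The paper avoids this by replacing Taylor-with-remainder by the global polynomial bound \eqref{LM310-31}, valid for all $u\in(-1,\infty)$, in which the quadratic coefficient is pinned at the base point; the price is an extra positive cubic term $\tfrac{p(p-2)(p-4)}{2^3\times 3!}\xi_{k-1}^3$ whose conditional expectation must then be shown to be of order $h$ times a bounded quantity (the $B_1,\dots,B_4$ computation via Gaussian moment identities and Young's inequality). To repair your argument you must either adopt such a global expansion, or supply a quantitative lower bound on $V(\xi_k)^{p/2-2}$ relative to $V(X_{k-1})^{p/2-2}$ that survives integration against the unbounded Brownian increment; without one of these, the key step of your proof is not justified.
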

\begin{proof}
 First, by  \eqref{DASDE} and \eqref{as32} , we have
\begin{equation}\label{LM310-26}
    \begin{aligned}
    &\quad|X_k|^2-|X_{k-1}|^2+|X_k-X_{k-1}|^2\\
        &=2 \langle X_k-X_{k-1},X_k \rangle \\
    &=2 \langle f(X_k),X_k \rangle h + 2 \langle g(X_{k-1}) \Delta W_{k-1},X_k \rangle \\
    &=2 \langle f(X_k),X_k \rangle h + 2 \langle g(X_{k-1}) \Delta W_{k-1},X_k -X_{k-1}\rangle +  2 \langle g(X_{k-1}) \Delta W_{k-1},X_{k-1} \rangle\\
    &\leq  (c_2+c_1|X_k|^2)h - l_1|g(X_k)|^2 h+|g(X_{k-1}) \Delta W_{k-1}|^2+|X_k-X_{k-1}|^2 +2 \langle g(X_{k-1}) \Delta W_{k-1},X_{k-1} \rangle.
    \end{aligned}
\end{equation}
Canceling the same terms on both sides gives
\begin{equation}\label{LM310-27}
        \begin{aligned}
        &(1-c_1h)|X_k|^2+l_1|g(X_k)|^2 h \\
        \leq &|X_{k-1}|^2+|g(X_{k-1}) \Delta W_{k-1}|^2
        +2 \langle g(X_{k-1}) \Delta W_{k-1},X_{k-1} \rangle +c_2 h.
        \end{aligned}
    \end{equation}
    Since $l_1 \geq 3$ and $c_1 > 0$, we see that for any constant $D > 0$
    \begin{equation}\label{LM310-28}
        (1-c_1h)(D+|X_k|^2+l_1|g(X_k)|^2 h ) \leq (D+|X_{k-1}|^2+l_1|g(X_{k-1})|^2 h )(1+\xi _{k-1}),
    \end{equation}
   where
    \begin{equation}\label{LM310-29}
        \begin{aligned}
         \xi _{k-1}=\frac{|g(X_{k-1}) \Delta W_{k-1}|^2-l_1|g(X_{k-1})|^2 h    +2 \langle g(X_{k-1}) \Delta W_{k-1},X_{k-1} \rangle +c_2 h}{D+|X_{k-1}|^2+l_1|g(X_{k-1})|^2 h }.
        \end{aligned}
    \end{equation}
     It is obvious that $\xi_{k-1}>-1$. Then we take conditional expectations with respect to $\mathcal{F}_{(k-1)h}$ on \eqref{LM310-28} leads to
  \begin{align}\label{LM310-30}
       &\mathbb{E}\Big((D+|X_{k}|^2+l_1|g(X_{k})|^2 h)^{p/2}\Big|\mathcal{F}_{(k-1)h}\Big) \\ \notag
        \leq  & \Big(1-c_1h\Big)^{-p/2}\Big(D+|X_{k-1}|^2+l_1|g(X_{k-1})|^2h\Big)^{p/2}
        \mathbb{E}\Big( (1+\xi _{k-1})^{p/2}\Big | \mathcal{F}_{(k-1)h}\Big) \\ \notag
        \leq & \Big(1-c_1h\Big)^{-p/2}\Big(D+|X_{k-1}|^2+l_1|g(X_{k-1})|^2h\Big)^{p/2}\mathbb{E}\Big( 1+\frac{p}{2} \xi _{k-1}+\frac{p(p-2)}{8}\xi _{k-1}^2  \\ \notag
        &+\frac{p(p-2)(p-4)}{2^3 \times 3!}\xi _{k-1}^3\Big | \mathcal{F}_{(k-1)h}\Big), \notag
  \end{align}
    where the last step, we use the following inequality
\begin{equation}\label{LM310-31}
    (1+u)^{p/2} \leq 1+\frac{p}{2}u+\frac{p(p-2)}{8}u^2+\frac{p(p-2)(p-4)}{2^3 \times 3!}u^3, \quad \forall p \in(0, 1),u\in (-1,\infty).
\end{equation}
    Since $\Delta W_{k-1}$ is independent of $\mathcal{F}_{(k-1)h}$,  for any $i \in \mathbb{N}^+,$  it is not difficult to see that
    \[\mathbb{E}\Big((\Delta W_{k-1})^{2i-1}\big|\mathcal{F}_{(k-1)h}\Big)=\mathbb{E}\Big((\Delta W_{k-1})^{2i-1}\Big)=0,\]
    \[\mathbb{E}\Big(|\Delta W_{k-1}|^{2i}\big|\mathcal{F}_{(k-1)h}\Big)=\mathbb{E}\Big(|\Delta W_{k-1}|^{2i}\Big)=(2i-1)!! h^i.\]
    This, together with \eqref{LM310-29} yields
    \begin{align} \label{LM310-32}
        &\mathbb{E}\Big(\xi _{k-1}\Big | \mathcal{F}_{(k-1)h}\Big) \\ \notag
     =&\mathbb{E}\Bigg(\frac{|g(X_{k-1}) \Delta W_{k-1}|^2-l_1|g(X_{k-1})|^2 h    +2 \langle g(X_{k-1}) \Delta W_{k-1},X_{k-1} \rangle +c_2 h}{D+|X_{k-1}|^2+l_1|g(X_{k-1})|^2 h }\Big | \mathcal{F}_{(k-1)h}\Bigg) \\ \notag
     =&\frac{(1-l_1)|g(X_{k-1})|^2 h +c_2h}{D+|X_{k-1}|^2+l_1|g(X_{k-1})|^2 h}.  \notag
    \end{align}
        Similarly, we can get
       \begin{align} \label{LM310-33}
             &\quad\mathbb{E}\Big(\xi _{k-1}^2\Big | \mathcal{F}_{(k-1)h}\Big) \\ \notag
                 &=\mathbb{E}\Bigg(\Big(D+|X_{k-1}|^2+l_1|g(X_{k-1})|^2 h\Big)^{-2} \Big((3-2l_1+l_1^2)|g(X_{k-1})|^4 h^2 \\\notag
                 &\quad+4|\langle g(X_{k-1})\Delta W_{k-1}, X_{k-1} \rangle|^2
                 + 2(1-l_1)c_2|g(X_{k-1})|^2 h^2+c_2^2h^2\Big)\Big | \mathcal{F}_{(k-1)h}\Bigg) \\ \notag
                  &=\mathbb{E}\Bigg(\Big(D+|X_{k-1}|^2+l_1|g(X_{k-1})|^2 h \Big)^{-2} \Big(2|g(X_{k-1})|^4 h^2 + (1-l_1)^2|g(X_{k-1})|^4 h^2  \\ \notag
                  &\quad+ 2(1-l_1)c_2|g(X_{k-1})|^2 h^2+c_2^2h^2
                   +4|\langle g(X_{k-1})\Delta W_{k-1}, X_{k-1} \rangle|^2\Big)\Big | \mathcal{F}_{(k-1)h}\Bigg) \\ \notag
                 &\geq  \frac{4|\langle g(X_{k-1}), X_{k-1} \rangle|^2h}{(D+|X_{k-1}|^2+l_1|g(X_{k-1})|^2 h )^2} \notag
       \end{align}
        and
            \begin{align}\label{LM310-34}
               & \quad  \mathbb{E}\Big(\xi _{k-1}^3\Big | \mathcal{F}_{(k-1)h}\Big) \\ \notag
                 &=\mathbb{E}\Bigg(\frac{\big[|g(X_{k-1}) \Delta W_{k-1}|^2-l_1|g(X_{k-1})|^2 h  }{(D+|X_{k-1}|^2+l_1|g(X_{k-1})|^2 h )^3} \frac{ +2 \langle g(X_{k-1}) \Delta W_{k-1},X_{k-1} \rangle +c_2 h \big]^3}{}\Big | \mathcal{F}_{(k-1)h}\Bigg) \\ \notag
                 &:= B_1+B_2+B_3+B_4. \notag
            \end{align}
        In the sequel, we will estimate $B_1, B_2, B_3, B_4$ separately. Firstly, we have
            \begin{align}\label{LM310-35}
                 B_1 &=\mathbb{E}\Bigg(\frac{\big[|g(X_{k-1}) \Delta W_{k-1}|^2-l_1|g(X_{k-1})|^2 h    +2 \langle g(X_{k-1}) \Delta W_{k-1},X_{k-1} \rangle \big]^3}{(D+|X_{k-1}|^2+l_1|g(X_{k-1})|^2 h )^3}\Big | \mathcal{F}_{(k-1)h}\Bigg) \\  \notag
                & = \frac{(15-9l_1 + 3l_1^2 -l_1^3)|g(X_{k-1})|^6 h^3 + 12(3-l_1)|g(X_{k-1})|^2 |\langle g(X_{k-1}),X_{k-1} \rangle|^2 h^2}{(D+|X_{k-1}|^2+l_1|g(X_{k-1})|^2 h )^3}.
            \end{align}
       Then,
       \begin{align}\label{LM310-36}
           B_2 &= \mathbb{E}\Bigg(\frac{3\big[|g(X_{k-1}) \Delta W_{k-1}|^2-l_1|g(X_{k-1})|^2 h    +2 \langle g(X_{k-1}) \Delta W_{k-1},X_{k-1} \rangle \big]^2 \times c_2h}{(D+|X_{k-1}|^2+l_1|g(X_{k-1})|^2 h )^3} \Big | \mathcal{F}_{(k-1)h}\Bigg) \\ \notag
                & = \frac{3(3-2l_1+l_1^2)|g(X_{k-1})|^4 h^2 \times c_2 h +12|\langle g(X_{k-1}),X_{k-1} \rangle|^2 h \times c_2 h}{(D+|X_{k-1}|^2+l_1|g(X_{k-1})|^2 h )^3}.
       \end{align}
        Next,
      \begin{align}\label{LM310-37}
             B_3  &= \mathbb{E}\Bigg(\frac{3\big[|g(X_{k-1}) \Delta W_{k-1}|^2-l_1|g(X_{k-1})|^2 h    +2 \langle g(X_{k-1}) \Delta W_{k-1},X_{k-1} \rangle \big]\times \big(c_2h\big)^2 }{(D+|X_{k-1}|^2+l_1|g(X_{k-1})|^2 h )^3}
            \Big | \mathcal{F}_{(k-1)h}\Bigg) \\  \notag
                & = \frac{3(1 - l_1)|g(X_{k-1})|^2 h \times \big(c_2 h \big)^2}{(D+|X_{k-1}|^2+l_1|g(X_{k-1})|^2 h )^3} .
      \end{align}
       Finally,
      \begin{align}\label{LM310-38}
           B_4   &= \mathbb{E}\Bigg(\frac{ \big(c_2h\big)^3}{(D+|X_{k-1}|^2+l_1|g(X_{k-1})|^2 h )^3}\Big | \mathcal{F}_{(k-1)h}\Bigg) \\ \notag
                & = \frac{\big(c_2 h \big)^3}{(D+|X_{k-1}|^2+l_1|g(X_{k-1})|^2 h )^3}. \notag
      \end{align}
        Since $15-9l_1 + 3l_1^2 -l_1^3 < 0$ and  $1-l_1 < 0$ as well as $3 - l_1 \leq 0 $ when $ l_1 \geq 3 $, by Young’s inequality $|a|^\frac{2-p}{6-p}|b|^{\frac{4}{6-p}} \leq \frac{2-p}{6-p}|a| + \frac{4}{6-p}|b| \leq |a| +|b|$ and $|a||b| \leq |a|^2+|b|^2$ for any $a,b \in \mathbb{R}^d$, then inserting \eqref{LM310-35}- \eqref{LM310-38} into \eqref{LM310-34} leads to
       \begin{align} \label{LM310-39}
               &\quad\mathbb{E}\Big(\xi _{k-1}^3\Big | \mathcal{F}_{(k-1)h}\Big) \\ \notag
               & \leq  \frac{3(3-2l_1+l_1^2)|g(X_{k-1})|^4 c_2 h^3 +12|\langle g(X_{k-1}),X_{k-1} \rangle|^2  c_2 h^2 + (c_2 h)^3}{(D+|X_{k-1}|^2+l_1|g(X_{k-1})|^2 h )^3} \\\notag
               & \leq \frac{1}{(D+|X_{k-1}|^2+l_1|g(X_{k-1})|^2 h )^{p/2}} \Big( \frac{3(3-2l_1+l_1^2)|g(X_{k-1})|^4 c_2 h^3}{D^{1-p/2}\times l_1^2|g(X_{k-1})|^4 h^2} \\ \notag
              & \quad + \frac{12|g(X_{k-1})|^2 |X_{k-1}|^2  c_2 h^2}{D^{1-p/2}\times 4 l_1 |g(X_{k-1})|^2 |X_{k-1}|^2  h } + \frac{(c_2 h)^3}{D^{3-p/2}}\Big)   \\ \notag
              & \leq  \frac{1}{(D+|X_{k-1}|^2+l_1|g(X_{k-1})|^2 h )^{p/2}} \Big(\frac{3 c_2 h}{D^{1-p/2}} + \frac{3 c_2 h}{D^{1-p/2} l_1} + \frac{c_2 ^3 h^3}{D^{3-p/2}} \Big).\notag
       \end{align}
        Choosing $h$ sufficiently small such that $l_1 h \leq \alpha$, and inserting \eqref{LM310-32},\eqref{LM310-33} and \eqref{LM310-39} into \eqref{LM310-30},  then we have
  \begin{align} \label{LM310-40}
      &\quad\mathbb{E}\Big((D+|X_{k}|^2+l_1|g(X_{k})|^2 h)^{p/2}\Big|\mathcal{F}_{(k-1)h}\Big) \\   \notag
         &\leq\Bigg(1-c_1h\Bigg)^{-p/2}\Bigg(D+|X_{k-1}|^2+l_1|g(X_{k-1})|^2h\Bigg)^{p/2}\Bigg( 1 + \frac{p}{2} \frac{(1-l_1)|g(X_{k-1})|^2 h +c_2h}{D+|X_{k-1}|^2+l_1|g(X_{k-1})|^2 h}
        \\ \notag
         &\quad+\frac{p(p-2)}{8}\frac{4|\langle g(X_{k-1}), X_{k-1} \rangle|^2h}{(D+|X_{k-1}|^2+l_1|g(X_{k-1})|^2 h )^2}  +\frac{p(p-2)(p-4)}{2^3 \times 3!}
       \frac{1}{(D+|X_{k-1}|^2+l_1|g(X_{k-1})|^2 h )^{p/2}}  \\ \notag
       &\quad \times\Big(\frac{3 c_2 h}{D^{1-p/2}} + \frac{3c_2 h}{D^{1-p/2} l_1} + \frac{c_2 ^3 h^3}{D^{3-p/2}} \Big)\Bigg) \\ \notag
         &\leq\Bigg(1-c_1h\Bigg)^{-p/2}\Bigg(D+|X_{k-1}|^2+l_1|g(X_{k-1})|^2h\Bigg)^{p/2}\Bigg( 1+\frac{p}{2} \frac{(1-l_1)|g(X_{k-1})|^2 h}{D+|X_{k-1}|^2+\alpha|g(X_{k-1})|^2 }
         \\ \notag
         &\quad+\frac{p(p-2)}{8}\frac{4|\langle g(X_{k-1}), X_{k-1} \rangle|^2h}{(D+|X_{k-1}|^2+\alpha|g(X_{k-1})|^2 )^2}+\frac{p}{2} \frac{c_2h}{D+|X_{k-1}|^2+l_1|g(X_{k-1})|^2 h }
         \\ \notag
         &\quad+\frac{p(p-2)(p-4)}{2^3 \times 3!} \frac{1}{(D+|X_{k-1}|^2+l_1|g(X_{k-1})|^2 h )^{p/2}}
       \Big(\frac{3 c_2 h}{D^{1-p/2}} + \frac{3c_2 h}{D^{1-p/2} l_1} + \frac{c_2 ^3 h^3}{D^{3-p/2}} \Big)\Bigg) \\ \notag
         &\leq \Bigg(1-c_1h\Bigg)^{-p/2}\Bigg(D+|X_{k-1}|^2+l_1|g(X_{k-1})|^2h\Bigg)^{p/2} \\ \notag
         &\quad \times\Bigg( 1+\frac{ph}{2} \Big(\frac{(1-l_1)|g(X_{k-1})|^2 }{D+|X_{k-1}|^2+\alpha|g(X_{k-1})|^2 }
         -\frac{2|\langle g(X_{k-1}), X_{k-1} \rangle|^2}{(D+|X_{k-1}|^2+\alpha|g(X_{k-1})|^2  )^2}\Big)\\ \notag
         &\quad+ \frac{p^2h}{2} \frac{|\langle g(X_{k-1}), X_{k-1} \rangle|^2}{(D+|X_{k-1}|^2+\alpha|g(X_{k-1})|^2 )^2}
         +\frac{p}{2} \frac{c_2h}{D+|X_{k-1}|^2+l_1|g(X_{k-1})|^2 h } \\ \notag
         &\quad+\frac{p(p-2)(p-4)}{2^3 \times 3!} \frac{1}{(D+|X_{k-1}|^2+l_1|g(X_{k-1})|^2 h )^{p/2}}  \Big(\frac{3 c_2 h}{D^{1-p/2}} + \frac{3c_2 h}{D^{1-p/2} l_1} + \frac{c_2 ^3 h^3}{D^{3-p/2}} \Big)\Bigg).
  \end{align}
  Noting, by \eqref{as33} as well as the elementary inequality $a^2 +b^2 \geq 2|a| |b|$ and $\langle a, b \rangle \leq |a||b|$, we have
  \begin{align} \label{LM310-41}
     &\quad \frac{(1-l_1)|g(X_{k-1})|^2 }{D+|X_{k-1}|^2+\alpha|g(X_{k-1})|^2 }
         -\frac{2|\langle g(X_{k-1}), X_{k-1} \rangle|^2}{(D+|X_{k-1}|^2+\alpha|g(X_{k-1})|^2  )^2}  \leq k_1 + \frac{P(X_{k-1})}{(D+|X_{k-1}|^2+\alpha|g(X_{k-1})|^2  )^2},
  \end{align}
  and
  \begin{align} \label{LM310-42}
       \frac{|\langle g(X_{k-1}), X_{k-1} \rangle|^2}{(D+|X_{k-1}|^2+\alpha|g(X_{k-1})|^2 )^2} &\leq \frac{|X_{k-1} |^2| g(X_{k-1})|^2}{(|X_{k-1}|^2+\alpha|g(X_{k-1})|^2 )^2} \\ \notag
         &\leq\frac{|X_{k-1} |^2| g(X_{k-1})|^2}{(2\sqrt{\alpha}|X_{k-1} || g(X_{k-1})|)^2} \\ \notag
       &=\frac{1}{4\alpha}.
  \end{align}
    If we choose $p$ sufficiently small, then by \eqref{as33} again,
   \begin{align} \label{LM310-43}
        \frac{(D+|X_{k-1}|^2+l_1|g(X_{k-1})|^2h)^{p/2}P(X_{k-1})}{(D+|X_{k-1}|^2+\alpha|g(X_{k-1})|^2  )^2} \leq C_3.
   \end{align}
 Therefore, Substituting \eqref{LM310-41}, \eqref{LM310-42} and \eqref{LM310-43} into \eqref{LM310-43}, let $h \leq 1/2c_1$, we obtain
   \begin{align} \label{LM310-44}
        &\quad\mathbb{E}\Big((D+|X_{k}|^2+l_1|g(X_{k})|^2 h)^{p/2}\Big|\mathcal{F}_{kh}\Big) \\  \notag
          &\leq \Bigg(1-c_1h\Bigg)^{-p/2}\Bigg(D+|X_{k-1}|^2+l_1|g(X_{k-1})|^2h\Bigg)^{p/2}\Bigg( 1+\frac{p}{2}k_1h+\frac{p^2}{2}\frac{h}{4\alpha}\Bigg)   \\ \notag
         &\quad+\Bigg(1-c_1h\Bigg)^{-p/2}\Bigg(\frac{ph}{2}\frac{(D+|X_{k-1}|^2+l_1|g(X_{k-1})|^2)^{p/2}P(X_{k-1})}{(D+|X_{k-1}|^2+\alpha|g(X_{k-1})|^2  )^2}h +\frac{p}{2} \frac{c_2h}{D^{1-p/2}}  \\ \notag
         & \quad+\frac{p(p-2)(p-4)}{2^3 \times 3!}
          \Big(\frac{3 c_2 h}{D^{1-p/2}} + \frac{3c_2 h}{D^{1-p/2} l_1} + \frac{c_2 ^3 h^3}{D^{3-p/2}} \Big)\Bigg).      \\ \notag
            &\leq \Bigg(1-c_1h\Bigg)^{-p/2}\Bigg(D+|X_{k-1}|^2+l_1|g(X_{k-1})|^2h\Bigg)^{p/2}\Bigg( 1+\frac{p}{2}k_1h+\frac{p^2h}{8\alpha}\Bigg)+ Mh,
   \end{align}
    where $M$ is a positive constant depends on $D, C_3, c_2$ and $p$. Furthermore, for any $p \in (0,1)$, $h \in (0,1/c_1)$ and $\kappa \in [-\frac{1}{2},\frac{1}{2}]$, it is obvious that
    \begin{equation}\label{LM310-45}
        (1-c_1h)^{p/2}\geq 1-\frac{p}{2}c_1h-K_1h^2
    \end{equation}
    as well as
    \begin{equation}\label{LM310-46}
         \frac{1}{1-\kappa} \leq 1+\kappa+\kappa^2\sum\limits _{i=0}^\infty(\frac{1}{2})^i=1+\kappa+2\kappa^2,
    \end{equation}
    where $K_1=K_1(c_1,p)$ is a positive constant. Define $\epsilon :=\frac{1}{4}|k_1+c_1|$.  And if necessary, we  choose $h^*$ and $p^*$ suﬀiciently small such that for any $h \in (0, h^*]$ and $p \in (0, p^*]$,
    \begin{equation}\label{LM310-47}
        \frac{p^\ast}{4\alpha} \leq \epsilon,
    \end{equation}
    \begin{equation} \label{LM310-48}
       \left|\frac{p}{2}c_1h+K_1h^2\right| \leq \frac{1}{2},
   \end{equation}
   and
   \begin{equation}  \label{LM310-49}
       K_1h+2\bigg(\frac{p}{2}c_1+K_1h\bigg)^2h+\frac{p}{2}\bigg(k_1+\epsilon\bigg)\bigg((\frac{p}{2}c_1h+K_1h^2)
             +2(\frac{p}{2}c_1h+K_1h^2)^2\bigg)
             \leq \frac{p}{2}\epsilon.
   \end{equation}
   Therefore, combining \eqref{LM310-44}-\eqref{LM310-49} together,  we obtain that
   \begin{equation*}
         \begin{aligned}
             &\mathbb{E}\bigg(D+|X_{k}|^2+l_1h|g(X_{k})|^2 \bigg)^{p/2}   \\
            \leq & \bigg(\frac{1+\frac{p}{2}k_1h+\frac{p}{2}\epsilon h}{1-(\frac{p}{2}c_1h+K_1h^2)} \bigg)\mathbb{E}\bigg(D+|X_{k-1}|^2+l_1h|g(X_{k-1})|^2 \bigg)^{p/2}+Mh   \\
             \leq& \bigg(1+\frac{p}{2}k_1h+\frac{p}{2}\epsilon h\bigg)\bigg(1+(\frac{p}{2}c_1h+K_1h^2)+2(\frac{p}{2}c_1h+K_1h^2)^2 \bigg)\mathbb{E}\bigg(D+|X_{k-1}|^2 +l_1h|g(X_{k-1})|^2 \bigg)^{p/2}+Mh      \\
             \leq & \bigg(1+\frac{p}{2}(k_1+c_1+\epsilon)h+\frac{p}{2}\epsilon h\bigg)\mathbb{E}\bigg(D+|X_{k-1}|^2+l_1h|g(X_{k-1})|^2 \bigg)^{p/2}+Mh    \\
             \leq &\bigg(1+\frac{p}{4}(k_1+c_1)h\bigg)\mathbb{E}\bigg(D+|X_{k-1}|^2+l_1h|g(X_{k-1})|^2 \bigg)^{p/2}+Mh.
             \end{aligned}
     \end{equation*}
     Let $h < -4/(p(k_1+c_1))$. By iteration, we get
     \begin{align*}
         &\mathbb{E}\bigg(D+|X_{k}|^2+l_1h|g(X_{k})|^2 \bigg)^{p/2} \\
         \leq& \bigg(1+\frac{p}{4}(k_1+c_1)h\bigg)^k\mathbb{E}\bigg(D+|x_0|^2+l_1h|g(x_0)|^2 \bigg)^{p/2}+\frac{1-(1+\frac{p}{4}(k_1+c_1)h)^k}{1-(1+\frac{p}{4}(k_1+c_1)h)}Mh.
      \end{align*}
     This implies that
     \begin{equation*}
         \mathbb{E}(|X_{k}|^p )\leq \mathbb{E}\bigg(D+|x_0|^2+l_1h|g(x_0)|^2 \bigg)^{p/2}-\frac{4M}{p(k_1+c_1)}.
     \end{equation*}
      The proof is completed.
\end{proof}

\begin{lemma}\label{LM311}
    Suppose Assumption \eqref{as32} and \eqref{as33} hold, then there exists a constant $p^* \in (0,1)$ such for any $p \in (0, p^*]$ and $D \in (0, \infty)$, the solution of \eqref{SDE} satisfies
    \begin{equation*}
    \mathbb{E}|x(t)|^p\leq \mathbb{E}\big(D+|x_0|^2\big)^\frac{p}{2}+K_2
\end{equation*}
for any $t > 0$, where $K_2$ is a constant depends on $p,c_1,c_2$ and $D$.
\end{lemma}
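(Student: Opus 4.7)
The plan is to apply It\^o's formula to the Lyapunov-type function $V(x) := (D+|x|^2)^{p/2}$ and derive a Foster--Lyapunov inequality of the form $\mathcal{L}V(x) \le -\lambda V(x)+K_1$ for suitably small $p > 0$, from which the uniform-in-$t$ moment bound follows by Gronwall after a standard localization.

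A direct It\^o computation (noting that $W$ is scalar) gives
\[
\mathcal{L}V(x) = \tfrac{p}{2}(D+|x|^2)^{p/2-1}\Bigl[2\langle x,f(x)\rangle+|g(x)|^2+(p-2)\tfrac{\langle x,g(x)\rangle^2}{D+|x|^2}\Bigr].
\]
Substituting the Khasminskii-type bound $2\langle x,f(x)\rangle \le c_1|x|^2+c_2-l_1|g(x)|^2$ from Assumption \ref{as32} and using $|x|^2 \le D+|x|^2$, this is bounded by
\[
\tfrac{p}{2}V(x)\Bigl[c_1+\tfrac{c_2}{D+|x|^2}+\tfrac{(1-l_1)|g|^2}{D+|x|^2}-\tfrac{(2-p)\langle x,g\rangle^2}{(D+|x|^2)^2}\Bigr].
\]

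To invoke Assumption \ref{as33} (whose denominators contain the extra $\alpha|g|^2$), I would exploit two monotonicity facts in $s\in[0,\alpha]$: the map $s\mapsto (1-l_1)|g|^2/(D+|x|^2+s|g|^2)$ is increasing because $1-l_1<0$, and $s\mapsto -\langle x,g\rangle^2/(D+|x|^2+s|g|^2)^2$ is increasing. Splitting $-(2-p)=-2+p$, the $-2$ piece combines with the $(1-l_1)|g|^2$ term and is dominated by $k_1+P(x)/(D+|x|^2+\alpha|g|^2)^2$ via Assumption \ref{as33}; the latter ratio is bounded by $C_3/D^{p/2}$ using the given bound on $P$ together with $D+|x|^2+\alpha|g|^2\ge D$. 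The residual $+p$ piece is handled by the AM-GM inequality $(D+|x|^2+\alpha|g|^2)^2 \ge 4\alpha|x|^2|g|^2 \ge 4\alpha\langle x,g\rangle^2$, which yields the constant bound $p/(4\alpha)$.

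Collecting terms, $\mathcal{L}V(x)\le \tfrac{p}{2}V(x)\bigl[(c_1+k_1)+c_2/(D+|x|^2)+C_3/D^{p/2}+p/(4\alpha)\bigr]$. Since $c_1+k_1<0$, for $p$ sufficiently small the bracket is bounded above by $-|c_1+k_1|/4$ whenever $|x|>R$ for some $R$, while on the compact set $\{|x|\le R\}$ both $V$ and $\mathcal{L}V$ are bounded. This gives $\mathcal{L}V(x)\le -\lambda V(x)+K_1$ for some $\lambda, K_1>0$. The standard stopping-time argument with $\tau_n := \inf\{t\ge 0:|x(t)|\ge n\}$ justifies taking expectations in It\^o's formula on $[0,t\wedge\tau_n]$; applying Gronwall to $\phi_n(t) := \mathbb{E}V(x(t\wedge\tau_n))$ yields $\phi_n(t) \le V(x_0)+K_1/\lambda$, and Fatou as $n\to\infty$ gives $\mathbb{E}V(x(t))\le V(x_0)+K_1/\lambda$ uniformly in $t\ge 0$. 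Since $|x|^p\le V(x)$, the lemma follows with $K_2:=K_1/\lambda$. The main obstacle I anticipate is the careful reconciliation between Assumption \ref{as33} and the It\^o generator: the sign $1-l_1<0$ and the monotonicity directions must be tracked meticulously to ensure the inequalities point the right way, after which the AM-GM residual and the polynomial bound on $P(x)$ close the argument.
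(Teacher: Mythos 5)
Your overall strategy is the paper's: apply It\^o's formula to $(D+|x|^2)^{p/2}$, insert the Khasminskii-type bound \eqref{S3-2}, pass from the denominator $D+|x|^2$ to $D+|x|^2+\alpha|g|^2$ using the monotonicity of the two negative-numerator terms, split $p-2=-2+p$ so that Assumption \ref{as33} absorbs the $-2$ piece and AM--GM gives the $p/(4\alpha)$ bound for the residual, and finish with an exponential weight / Gronwall argument. All of that matches the paper's computation in \eqref{LM311-50}--\eqref{LM311-54}, and your localization/Fatou step is fine (indeed more careful than the paper, which takes expectations without comment).

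The gap is in how you dispose of the $P(x)$ term. You bound $P(x)/(D+|x|^2+\alpha|g(x)|^2)^{2}$ by $C_3/D^{p/2}$ and leave that constant \emph{inside} the bracket multiplying $V(x)$, so your drift estimate reads $\mathcal{L}V\le \tfrac{p}{2}V\cdot\bigl[(c_1+k_1)+C_3 D^{-p/2}+p/(4\alpha)+c_2/(D+|x|^2)\bigr]$. To get a Foster--Lyapunov inequality from this you need the bracket to be negative for large $|x|$, i.e.\ essentially $c_1+k_1+C_3D^{-p/2}<0$. That does not follow from the hypotheses: only $c_1+k_1<0$ is assumed, $C_3\ge 0$ is an arbitrary constant, and $C_3D^{-p/2}\to C_3$ as $p\to 0$, so shrinking $p$ does not rescue the step and the bracket can well be positive. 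The correct accounting (the paper's \eqref{LM311-52}) pairs the prefactor $(D+|x|^2)^{p/2-1}\cdot(D+|x|^2)=(D+|x|^2)^{p/2}\le (D+|x|^2+\alpha|g(x)|^2)^{p/2}$ against the denominator, so that
\begin{equation*}
(D+|x|^2)^{p/2}\,\frac{P(x)}{(D+|x|^2+\alpha|g(x)|^2)^{2}}\;\le\;\frac{P(x)}{(D+|x|^2+\alpha|g(x)|^2)^{2-\frac{p}{2}}}\;\le\;C_3,
\end{equation*}
i.e.\ the $P$-term contributes a bounded \emph{additive} constant (it belongs in your $K_1$, alongside $c_2D^{p/2-1}$), not a constant multiple of $V$. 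With that single correction your argument closes exactly as the paper's does and yields the stated $K_2$.
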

\begin{proof}
    First,  Let $\epsilon := p|c_1+k_1|/4$, by \eqref{SDE}, \eqref{as33} and the It\^{o} formula, we derive that
    \begin{equation}
\begin{aligned}\label{LM311-50}
    &\quad\mathbb{E}\Big(e^{\epsilon t}(D+|x(t)|^2)^\frac{p}{2}\Big) \\
    &\leq \mathbb{E}\Big(D+|x_0|^2\Big)^\frac{p}{2}+\mathbb{E}\int_0^t \Bigg(\epsilon e^{\epsilon s }\Big(D+|x(s)|^2\Big)^\frac{p}{2}+\frac{p}{2}e^{\epsilon s }\Big(D+|x(s)|^2\Big)^{\frac{p}{2}-1}      \\
    & \quad\Big(2\langle x(s),f(x(s))\rangle+|g(x(s))|^2 + \frac{(p-2)|\langle x(s),g(x(s))\rangle|^2}{D+|x(s)|^2}\Big)\Bigg)ds
     \end{aligned}
\end{equation}
where
\begin{align} \label{LM311-51}
    &\quad2\langle x(s),f(x(s))\rangle+|g(x(s))|^2+ \frac{(p-2)|\langle x(s),g(x(s))\rangle|^2}{D+|x(s)|^2}  \\ \notag
        &=\bigg(2\langle x(s),f(x(s))\rangle+l_1|g(x(s))|^2 \bigg)+\bigg(D+|x(s)|^2\bigg)\bigg(\frac{(1-l_1)|g(x(s)))|^2}{D+|x(s)|^2} + \frac{(p-2)|\langle x(s),g(x(s))\rangle|^2}{(D+|x(s)|^2)^2}\bigg) \\ \notag
        &\leq c_2+c_1|x(s)|^2+\bigg(D+|x(s)|^2\bigg)\bigg(\frac{(1-l_1)|g(x(s)))|^2}{D+|x(s)|^2+\alpha|g(x(s)|^2} + \frac{(p-2)|\langle x(s),g(x(s))\rangle|^2}{(D+|x(s)|^2+\alpha|g(x(s))|^2)^2}\bigg) \\ \notag
        &\leq c_2+c_1|x(s)|^2+\bigg(D+|x(s)|^2\bigg)\bigg(k_1+\frac{P(x(s))}{(D+|x(s)|^2+\alpha|g(x(s))|^2)^2} +\frac{p|\langle x(s),g(x(s))\rangle|^2}{(D+|x(s)|^2+\alpha|g(x(s))|^2)^2}\bigg)\\ \notag
        &\leq c_2+\bigg(D+|x(s)|^2\bigg)\bigg(c_1+k_1+\frac{P(x(s))}{(D+|x(s)|^2+\alpha|g(x(s))|^2)^2} +\frac{p|\langle x(s),g(x(s))\rangle|^2}{(D+|x(s)|^2+\alpha|g(x(s))|^2)^2}\bigg).
\end{align}
Following the same arguments used in the derivation of \eqref{LM310-44}, we can get
       \begin{align}\label{LM311-52}
            \bigg(D+|x(s)|^2\bigg)^{\frac{p}{2}-1} \bigg(D+|x(s)|^2\bigg)\bigg(\frac{P(x(s))}{(D+|x(s)|^2+\alpha|g(x(s))|^2)^2}\bigg) \leq  C_3 ,
       \end{align}
and
\begin{equation}\label{LM311-53}
    \frac{p|\langle x(s),g(x(s))\rangle|^2}{(D+|x(s)|^2+\alpha|g(x(s))|^2)^2} \leq \frac{p}{4\alpha},
\end{equation}
where $C_3$ is a constant specified in \eqref{as33}. Let $p$ sufficiently small such that $p/4\alpha \leq |c_1+k_1|/2$, then we substitute \eqref{LM311-51}-\eqref{LM311-53} into \eqref{LM311-50}
\begin{equation}\label{LM311-54}
    \begin{aligned}
        \mathbb{E}\Big(e^{\epsilon t}(D+|x(t)|^2)^\frac{p}{2}\Big) & \leq  \mathbb{E}\Big(D+|x_0|^2\Big)^\frac{p}{2}+\int_0^t \frac{p}{2} e^{\epsilon s}\Big(\frac{c_2}{D^{1-\frac{p}{2}}}+C_3\Big)ds  \\
        & \leq  \mathbb{E}\Big(D+|x_0|^2\Big)^\frac{p}{2}+\frac{p}{2}\frac{1}{\epsilon}\Big(e^{\epsilon t}-1\Big)\Big(\frac{c_2}{D^{1-\frac{p}{2}}}+C_3\Big).
    \end{aligned}
\end{equation}
Let $K_2 := (p/(2\epsilon))((c_2/D^{1-p/2})+C_3)$, and then we divide both sides of \eqref{LM311-54} by $e^{\epsilon t}$ such that
\begin{equation*}
      \mathbb{E}\Big((D+|x(t)|^2)^\frac{p}{2}\Big)
    \leq \mathbb{E}\Big(D+|x_0|^2\Big)^\frac{p}{2} \times e^{\frac{p}{4}(c_1+k_1)t} + K_2.
\end{equation*}
Since $c_1+k_1 < 0$, then the desired result follows.
\end{proof}

\subsection{The global attractivity.}
In this subsection, we will show the global attractivity of the underlying solution under the above assumptions.
\begin{lemma}\label{LM312}
    Suppose \eqref{as32} and \eqref{as34} hold, then there exists a sufficient small constant $p \in (0, 1)$  such that for any $t > 0$, the solution of \eqref{SDE} satisfies
 \begin{equation*}
    \mathbb{E}|x(t,x_0)-x(t, y_0)|^p \leq \mathbb{E}(|x_0-y_0|^p)e^{\frac{p}{4}(c_3+k_2)t}.
    \end{equation*}
\end{lemma}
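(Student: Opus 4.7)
The strategy mirrors the proof of Lemma \ref{LM311}, with Assumption \ref{as34} playing the role of Assumption \ref{as33} and the difference process $z(t) := x(t, x_0) - x(t, y_0)$ playing the role of $x(t)$. Since $c_3 + k_2 < 0$ by \eqref{as34}, this provides the negative Lyapunov exponent. Write $\Delta f := f(x(t, x_0)) - f(x(t, y_0))$ and $\Delta g := g(x(t, x_0)) - g(x(t, y_0))$. Because $|z|^p$ fails to be $C^2$ at the origin for $p \in (0, 1)$, I first regularize by $V_\eta(z) := (\eta + |z|^2)^{p/2}$ for small $\eta > 0$. It\^o's formula gives the drift
\begin{equation*}
\mathcal{L}V_\eta(z) = \frac{p}{2}(\eta + |z|^2)^{p/2-1}\left[2\langle z, \Delta f\rangle + |\Delta g|^2 + \frac{(p-2)|\langle z, \Delta g\rangle|^2}{\eta + |z|^2}\right].
\end{equation*}

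Use \eqref{S3-3} to estimate $2\langle z, \Delta f\rangle + l_2|\Delta g|^2 \leq c_3|z|^2$ and split the bracket accordingly. Following the manipulation in \eqref{LM311-51}, I then enlarge the inner denominator $\eta + |z|^2$ to $\eta + |z|^2 + \beta|\Delta g|^2$; this is an upper bound because the surviving coefficients $1 - l_2$ and $p - 2$ are both negative. Writing $(p-2) = -2 + p$, the combination
\begin{equation*}
\frac{(1-l_2)|\Delta g|^2}{|z|^2+\beta|\Delta g|^2} - \frac{2|\langle z, \Delta g\rangle|^2}{(|z|^2+\beta|\Delta g|^2)^2} \leq k_2
\end{equation*}
is handled directly by \eqref{as34}, while the leftover $p$-piece is controlled by Cauchy--Schwarz together with the AM--GM bound $|z|^2 + \beta|\Delta g|^2 \geq 2\sqrt{\beta}\,|z||\Delta g|$, giving $|\langle z, \Delta g\rangle|^2/(|z|^2+\beta|\Delta g|^2)^2 \leq 1/(4\beta)$. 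Choosing $p^* \in (0, 1)$ small enough that $p^*/(4\beta) \leq |c_3 + k_2|/2$, I arrive, modulo a vanishing $\eta$-correction from the denominator mismatch, at the Lyapunov inequality
\begin{equation*}
\mathcal{L}V_\eta(z) \leq \tfrac{p}{4}(c_3 + k_2)\, V_\eta(z) + o_\eta(1).
\end{equation*}

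Now apply It\^o to $e^{-\frac{p}{4}(c_3 + k_2)t}V_\eta(z(t))$ and localize by stopping times $\tau_N := \inf\{t : |x(t, x_0)| \vee |x(t, y_0)| \geq N\}$ to eliminate the martingale term; $\tau_N \to \infty$ a.s.\ by the moment bound from Lemma \ref{LM311}. Taking expectation and applying Gr\"onwall, then sending $N \to \infty$, yields $\mathbb{E}V_\eta(z(t)) \leq \mathbb{E}V_\eta(z(0))\, e^{\frac{p}{4}(c_3+k_2)t}$. Since $V_\eta(z) \downarrow |z|^p$ as $\eta \downarrow 0$, monotone convergence gives the stated inequality.

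The main obstacle is justifying the passage $\eta \to 0$: Assumption \ref{as34} is stated with $|z|^2 + \beta|\Delta g|^2$ in the denominator rather than $\eta + |z|^2 + \beta|\Delta g|^2$, and the replacement is in the \emph{wrong} direction for a direct upper bound (larger denominator makes the negative terms in \eqref{as34} less negative). Two routes remain: either track the explicit $\eta$-remainder, bounding it by a term proportional to $\eta|\Delta g|^4/(\eta+|z|^2+\beta|\Delta g|^2)^2$ that vanishes as $\eta\to0$; or invoke pathwise uniqueness of \eqref{SDE} under Assumptions \ref{as31}--\ref{as32} so that $z$ cannot cross zero, permitting the It\^o calculation to be carried out directly on $\{z \neq 0\}$ where $|z|^p$ is smooth, with $|z(t)|^p \equiv 0$ after any hitting time of the origin.
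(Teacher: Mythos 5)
Your proposal follows essentially the same route as the paper's proof: Itô's formula applied to the $p$-th moment of the difference process, the bound $2\langle z,\Delta f\rangle+l_2|\Delta g|^2\leq c_3|z|^2$ from \eqref{S3-3}, enlargement of the denominator to $|z|^2+\beta|\Delta g|^2$ using the negativity of $1-l_2$ and $p-2$, the split $(p-2)=-2+p$ with Assumption \ref{as34} handling the $-2$ part and the AM--GM bound $1/(4\beta)$ handling the $p$ part, and the smallness condition $p/(4\beta)\leq|c_3+k_2|/2$ leading to the exponential decay rate $\frac{p}{4}(c_3+k_2)$. The only difference is that you regularize $|z|^p$ by $(\eta+|z|^2)^{p/2}$ and carefully flag the resulting $\eta$-mismatch with Assumption \ref{as34}, a technical point the paper simply bypasses by applying Itô's formula to $(|D(s)|^2)^{p/2}$ directly; your treatment is the more rigorous of the two.
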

\begin{proof}
 First of all, we define $D(t):=x(t, x_0)-x(t, y_0),G(t):=g(x(t, x_0))-g(x(t, y_0)),F(t):=f(x(t, x_0))\\
 -f(x(t, y_0))$, and  $\gamma := p|c_3 + k_2|/4$, then by It\^o formula
  \begin{align}\label{LM312-55}
       \mathbb{E}\Big(e^{\gamma t}|D(t)|^p\Big) &=  \mathbb{E}\Big(|D(0)|^p\Big)+\mathbb{E}\int_0^t \gamma e^{\gamma s} |D(s)|^p+\frac{p}{2}e^{\gamma s} \Big(|D(s)|^2\Big)^{\frac{p}{2}-1}\\ \notag
    & \quad \times \Big(2\langle D(s),F(s)\rangle+|G(s)|^2 +\frac{(p-2)|\langle D(s),G(s)\rangle|^2}{|D(s)|^2}\Big)ds.    \notag
  \end{align}
Since $l_2 > 1$ and $p \in (0,1)$, we have
\begin{align*}
      &\quad2\langle D(s),F(s)\rangle+|G(s)|^2 +\frac{(p-2)|\langle D(s),G(s)\rangle|^2}{|D(s)|^2}  \\
        &=\bigg(2\langle D(s), F(s)\rangle+l_2|G(s)|^2\bigg)+\bigg(|D(s)|^2\bigg)\bigg(\frac{(1-l_2)|G(s)|^2}{|D(s)|^2}+\frac{(p-2)|\langle D(s),G(s)\rangle|^2}{(|D(s)|^2)^2} \bigg)   \\
        &\leq  \bigg( c_3|D(s)|^2\bigg)+\bigg(|D(s)|^2\bigg)\bigg(\frac{(1-l_2)|G(s)|^2}{|D(s)|^2+\beta|G(s)|^2}+\frac{(p-2)|\langle D(s),G(s)\rangle|^2}{(|D(s)|^2+\beta|G(s)|^2)^2} \bigg)   \\
        &\leq  \bigg( c_3|D(s)|^2\bigg)+\bigg(|D(s)|^2\bigg)\bigg(\frac{(1-l_2)|G(s)|^2}{|D(s)|^2+\beta|G(s)|^2}-\frac{2|\langle D(s),G(s)\rangle|^2}{(|D(s)|^2+\beta|G(s)|^2)^2}+\frac{p|\langle D(s),G(s))\rangle|^2}{(|D(s)|^2+\beta|G(s)|^2)^2}\bigg),
\end{align*}
Similar to the derivation of \eqref{LM310-42}, we can see that
\begin{equation*}
    \frac{p|\langle D(s),G(s)\rangle|^2}{(|D(s)|^2+\beta|G(s)|^2)^2} \leq \frac{p|D(s)|^2|G(s)|^2}{(2\sqrt{\beta}|D(s)||G(s)|)^2} =\frac{p}{4\beta}
\end{equation*}
 Let $p=2\beta|c_3+k_2|$, by \eqref{as34}, we have
\begin{equation} \label{LM312-56}
    \begin{aligned}
        2\langle D(s),F(s)\rangle+|G(s)|^2 +\frac{(p-2)|\langle D(s),G(s)\rangle|^2}{|D(s)|^2}
        &\leq c_3|D(s)|^2+|D(s)|^2(k_1+\frac{p}{4\beta} ) \\
        &\leq\frac{1}{2}(c_3+k_2)(|D(s)|^2),
    \end{aligned}
\end{equation}
Inserting \eqref{LM312-56} into \eqref{LM312-55} yields
\begin{align*}
     \mathbb{E}\Big(e^{\gamma t}|D(t)|^p\Big)
       &\leq  \mathbb{E}(|D(0)|^p + \mathbb{E}\int_0^t \gamma e^{\gamma s} |D(s)|^p+\frac{p}{2}e^{\gamma s}\Big(|D(s)|^2\Big)^{\frac{p}{2}-1}\Big(\frac{1}{2}(c_3+k_2)(|D(s)|^2)\Big) ds \\
       & \quad= \mathbb{E}(|D(0)|^p + \mathbb{E}\int_0^t (\gamma + \frac{p(c_3+k_2)}{4})e^{\gamma s} |D(s)|^p ds \\
       &\quad=\mathbb{E}(|D(0)|^p
\end{align*}
Dividing both sides by $e^{\gamma t}$ gives
\begin{equation*}
    \begin{aligned}
        \mathbb{E}(|D(t)|^p)\leq  \mathbb{E}(|D(0)|^p)e^{\frac{p}{4}(c_3+k_2)t}.
    \end{aligned}
\end{equation*}
Thus, the proof is completed.
\end{proof}
\subsection{Convergence}
First, we will show the finite-time convergence result that we need. In fact, this result has been proved in \cite{CG2020}, but we need to make a little modification to meet our requirements.
\begin{lemma}\label{LM36}
    Suppose \eqref{as31} and \eqref{as32} hold with $l_1 \geq 2q$, then there exists a pair of constants $(p^*, h^*)$ consistent with Lemmas \ref{LM310} and \ref{LM311} such that for any $p \in (0, \frac{2p^*}{l_1}]$ and $h \in (0, h^*]$, the solution of \eqref{SDE} and the solution of BEM method \eqref{DASDE} satisfie
\begin{align*}
   \sup \limits _{0\leq k \leq n} \mathbb{E}\Big(|x(kh) - X_k|^{p}\Big) \leq C_T\Big(1+\mathbb{E}(|x(0)|^{\frac{l_1 p}{2}})\Big)h^{\frac{p}{2}}
\end{align*}
for any $T > 0$, where $n:=T/h$ and $C_T$ is a positive constant depends on $T$.
\end{lemma}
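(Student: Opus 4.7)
The plan is to adapt the standard one-sided Lipschitz based strong convergence argument for the BEM method to the $L^p$ setting for small $p$, using the uniform moment bounds from Lemmas \ref{LM310} and \ref{LM311} to control terms arising from the super-linear growth of $f$ and $g$. Since the lemma only asks for a low-order moment of the error, I will aim at a straightforward mean-square analysis for some intermediate moment and then reduce to $p$ by Jensen/Hölder.

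First, I would introduce the continuous-time interpolant $\bar X(t)$ defined on each subinterval $[t_k, t_{k+1}]$ (with $t_k = kh$) by
$$\bar X(t) = X_k + (t-t_k)f(X_{k+1}) + g(X_k)(W(t)-W(t_k)),$$
so that $\bar X(t_{k+1}) = X_{k+1}$. Setting $e(t) = x(t) - \bar X(t)$, I would compute
$$de(t) = [f(x(t)) - f(X_{k+1})]\,dt + [g(x(t)) - g(X_k)]\,dW(t),$$
and split each difference as $f(x(t)) - f(X_{k+1}) = [f(x(t)) - f(\bar X(t))] + [f(\bar X(t)) - f(X_{k+1})]$, together with the analogous decomposition for $g$. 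The first bracket is matched against $e(t)$ via the monotonicity condition \eqref{S3-3}, while the second bracket measures the one-step oscillation of the scheme.

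Next, I would apply Itô's formula to $(D + |e(t)|^2)^{p/2}$ with a suitable $D > 0$, as in Lemma \ref{LM311}, and use \eqref{S3-3} to extract a non-positive contribution proportional to $c_3|e(t)|^2$. The remaining terms would be bounded using the polynomial growth \eqref{LM3-5}, Assumption \ref{as31}, and the local Hölder continuity of $x$ and $\bar X$ on each $[t_k, t_{k+1}]$. For each residual term I would apply Hölder's inequality to factor out powers of $|x(t)|$, $|X_k|$, $|X_{k+1}|$ or $|g(X_k)|$ (to be controlled by Lemmas \ref{LM310} and \ref{LM311}) from small-in-$h$ factors coming from the time step and the Brownian increments. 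Taking expectations, integrating over $[t_k, t_{k+1}]$, summing over $k$, and invoking the discrete Gronwall inequality would then produce the bound with an explicit, $T$-dependent constant $C_T$.

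The main obstacle is the bookkeeping of moment exponents forced by the super-linear growth. Differences such as $f(\bar X(t)) - f(X_{k+1})$ contribute factors of the form $(1 + |X_k|^{q-1} + |X_{k+1}|^{q-1})$; once raised to the $p$-th power and combined with $h^{p/2}$ coming from Brownian increments, closing the estimate requires $\mathbb{E}|X_k|^{l_1 p/2}$ to be finite uniformly in $k$. This is exactly what the hypothesis $p \leq 2p^*/l_1$ combined with $l_1 \geq 2q$ is designed to guarantee, so that Lemma \ref{LM310} delivers the needed uniform moment bound at the exponent $l_1 p/2 \leq p^*$. Ensuring that the compatibility of these exponents holds at every occurrence, and that the only $T$-dependence in the final constant comes from the Gronwall step, is the delicate technical point; the rest is a standard calculation along the lines of \cite{CG2020}.
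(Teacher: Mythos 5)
Your plan takes a genuinely different and much heavier route than the paper, and as sketched it has a real gap. The paper does not re-derive finite-time convergence at all: it quotes the mean-square result of \cite{CG2020} in the form $\mathbb{E}\big(|x(kh)-X_k|^{2}\,\big|\,\mathcal{F}_0\big)\leq C_T|x(0)-X_0|^{2}+C_T(1+|x(0)|^{l_1})h$, then applies Jensen's inequality with the concave power $p/2<1$ \emph{inside the conditional expectation}, which simultaneously turns $h$ into $h^{p/2}$ and, crucially, turns the initial-data factor $|x(0)|^{l_1}$ into $|x(0)|^{l_1 p/2}$; only then is the outer expectation taken. That conditioning-then-Jensen step is the entire content of the lemma, because in Theorem \ref{T22} the ``initial value'' is the random variable $X_{mn}^{x_0}$, whose only moments that are bounded uniformly in $m$ are the small ones supplied by Lemma \ref{LM310}. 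Your sketch never performs this step: an Itô/Gronwall derivation carried out in expectation from the start produces a bound involving $\mathbb{E}\big(|x(0)|^{l_1}\big)$ (or higher moments), not $\mathbb{E}\big(|x(0)|^{l_1 p/2}\big)$, and so does not prove the stated inequality.

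Your moment bookkeeping is also off in a way that matters. The residual terms generated by the super-linear coefficients --- e.g.\ $f(\bar X(t))-f(X_{k+1})$, which via Assumption \ref{as31} and \eqref{LM3-5} contributes factors of order $|X_k|^{2q-1}$ and worse --- require moments of the exact and numerical solutions of order comparable to $2q$ or $l_1$ to close the Gronwall estimate. These are available on a finite horizon with $T$-dependent constants (which is what \cite{CG2020} uses, and why $l_1\geq 2q$ is assumed), but they are emphatically \emph{not} supplied by Lemma \ref{LM310}, which only bounds moments of order $p\leq p^*$, a small number. So the claim that ``$p\leq 2p^*/l_1$ combined with $l_1\geq 2q$ is designed to guarantee'' closure of your Gronwall step misidentifies where those hypotheses act: $l_1\geq 2q$ feeds the cited mean-square result, while $l_1p/2\leq p^*$ is needed only later, when Lemma \ref{LM310} is used to bound $\Psi\big(\mathbb{E}|X_{mn}^{x_0}|^{l_1p/2}\big)$ in Theorem \ref{T22}. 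A secondary technical issue: your interpolant $\bar X(t)=X_k+(t-t_k)f(X_{k+1})+g(X_k)(W(t)-W(t_k))$ is not $\mathcal{F}_t$-adapted on $(t_k,t_{k+1})$ since $X_{k+1}$ depends on the whole increment $\Delta W_k$, so the Itô computation for $e(t)$ is not legitimate as written. To repair the proposal, either carry out the analysis at the discrete level (or with an adapted interpolant), keep it in mean square, state the result conditionally on $\mathcal{F}_0$ for a frozen initial value, and only then apply Jensen with exponent $p/2$ --- i.e., reproduce the paper's reduction --- or simply cite \cite{CG2020} as the paper does.
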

\begin{proof}
    When the initial value $x(0)$ of \eqref{SDE} is constant, we state the convergence result in \cite{CG2020} as follows.
    \begin{align*}
        \mathbb{E}\Big(|x(kh) - X_k|^{2}\Big) \leq C_T|x(0) - X_0|^{2} + C_T(1+|x(0)|^{l_1})h.
    \end{align*}
  And if the initial value $x(0)$ is a random variable, we can easily get that
     \begin{align*}
        \mathbb{E}\Big(|x(kh) - X_k|^{2}\Big|  \mathcal{F}_{0}\Big) \leq C_T|x(0) - X_0|^{2} + C_T(1+|x(0)|^{l_1})h.
    \end{align*}
    Then by the H\"{o}lder inequality and the elementary inequality \eqref{ELE}, we have
    \begin{align*}
        \mathbb{E}\Big(|x(kh) - X_k|^{p} \Big| \mathcal{F}_{0} \Big) &\leq
        \Bigg( \mathbb{E}\Big(|x(kh) - X_k|^{2}\Big|  \mathcal{F}_{0}\Big)\Bigg)^{\frac{p}{2}} \\ \notag
        &\leq
        \Bigg( C_T|x(0) - X_0|^{2} + C_T(1+|x(0)|^{l_1})h\Bigg)^{\frac{p}{2}} \\ \notag
        & \leq  C_T|x(0) - X_0|^{p} + C_T(1+(|x(0)|^{\frac{l_1 p}{2}}))h^{\frac{p}{2}},
    \end{align*}
    this implies
    \begin{align*}
        \mathbb{E}\Big(|x(kh) - X_k|^{p}\Big) \leq C_T\mathbb{E}\Big(|x(0) - X_0|^{p}\Big) + C_T\Big(1+\mathbb{E}(|x(0)|^{\frac{l_1 p}{2}})\Big)h^{\frac{p}{2}}.
    \end{align*}
    The required assertion follows.
\end{proof}

From \eqref{LM311}-\eqref{LM36}, let $r_1 = l_1p/2$, $p=p$, and $\Psi(x)=1+x^{\frac{l_1 p}{2}}$, Condition \ref{as21} and \ref{as22} are satisfied, then by \eqref{T22} and \eqref{T25}, we can conclude this part by the following theorems.
\begin{theorem}\label{T312}
    Under \eqref{as31}-\eqref{as34}, the BEM solution converges uniformly to the underlying solution of \eqref{SDE}.
\end{theorem}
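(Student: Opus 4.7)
The plan is to verify that every clause of Condition \ref{as21} holds for the BEM scheme under Assumptions \ref{as31}--\ref{as34}, and then invoke Theorem \ref{T22} as a black box. No new estimates are needed; the argument is one of parameter bookkeeping and of lining up the preparatory lemmas correctly.

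Concretely, I would check the four items of Condition \ref{as21} in turn. Part (i) (time-homogeneous Markovianity of $\{X_k\}$) is exactly Lemma \ref{L33}. Part (ii) (exponential attractivity of the underlying solution in the $p$-th moment) is Lemma \ref{LM312}: since $c_3+k_2<0$ by Assumption \ref{as34}, setting $M_1 := -p(c_3+k_2)/4 > 0$ yields the required contraction globally, hence a fortiori on any compact $K\times K$. For part (iii) I would quote Lemma \ref{LM36}, which supplies
\[
\EE|x(kh)-X_k|^p \leq C_T\bigl(1+\EE|x(0)|^{l_1 p/2}\bigr) h^{p/2},
\]
so the template of part (iii) is met with $q = p/2$, $r_1 = l_1 p/2$, and the continuous function $\Psi(u) = 1+u$. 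Part (iv) is furnished by Lemma \ref{LM310}, which gives a uniform bound on $\EE|X_k^{x_0}|^{r_1}$ as soon as $r_1 \leq p^{\ast}$. (For Theorem \ref{T25} one would also add Condition \ref{as22}, which is Lemma \ref{LM311}.)

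The only real obstacle is choosing a single exponent $p$ that honours every smallness restriction at once: $p \leq p^{\ast}$ so that Lemma \ref{LM310} applies; $p \leq 2\beta|c_3+k_2|$ so that the attractivity estimate of Lemma \ref{LM312} is in force; and, crucially, $p \leq 2p^{\ast}/l_1$ so that the moment order $r_1 = l_1 p / 2$ demanded in part (iii) lies in the range $(0, p^{\ast}]$ actually covered by part (iv). Taking $p$ to be the minimum of these thresholds resolves the bookkeeping. With such a $p$, every hypothesis of Theorem \ref{T22} is in place, and applying it gives
\[
\sup_{j\in\NN}\EE|x(jh,x_0)-X_j^{x_0}|^p \leq C h^{p/2},
\]
which is exactly the claimed uniform convergence of the BEM method.
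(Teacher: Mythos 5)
Your proposal is correct and follows essentially the same route as the paper, which likewise proves Theorem \ref{T312} by checking the four clauses of Condition \ref{as21} against Lemmas \ref{L33}, \ref{LM312}, \ref{LM36} and \ref{LM310} (with $r_1=l_1p/2$) and then invoking Theorem \ref{T22}. Your explicit tracking of the simultaneous smallness constraints on $p$ (in particular $p\leq 2p^{\ast}/l_1$ so that the moment order needed in part (iii) is covered by part (iv)) is in fact slightly more careful than the paper's one-line verification, and your choice $\Psi(u)=1+u$ is the consistent one.
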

Next, since \eqref{T312} and \eqref{as22} hold, from \eqref{T25}, we get the last theorem.
\begin{theorem}\label{T315}
   Under \eqref{as31}-\eqref{as34}, the probability measure of the BEM solution $\{X_k\}_{k \geq 0}$ converges to the underlying stationary distribution $\pi (\cdot)$.
\end{theorem}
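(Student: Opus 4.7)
The plan is direct: since Theorem \ref{T312} already supplies the uniform-in-time strong convergence of $X_k$ to $x(kh)$ in the $L^p$ sense for a sufficiently small $p\in(0,1)$, the work reduces to verifying Condition \ref{as22} and then invoking Theorem \ref{T25}. No new convergence analysis is required; the heavy lifting has been done in the preceding subsections.

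First I would apply Lemma \ref{LM311}, which under Assumptions \ref{as32} and \ref{as33} furnishes a threshold $p^{\ast}\in(0,1)$ such that for every $p\in(0,p^{\ast}]$ and every $D>0$,
\begin{equation*}
\sup_{t\geq 0}\mathbb{E}|x(t,x_0)|^p \;\leq\; \mathbb{E}\big(D+|x_0|^2\big)^{p/2} + K_2 \;=:\; M_5.
\end{equation*}
This is precisely Condition \ref{as22}. Next I would point out that the exponent $p$ used in Theorem \ref{T312} can be chosen to be the minimum of the admissible thresholds from Lemmas \ref{LM310}, \ref{LM311}, \ref{LM312} and \ref{LM36}, so that a single $p$ simultaneously validates Condition \ref{as21} (needed for Theorem \ref{T312}) and Condition \ref{as22}. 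Existence and uniqueness of the stationary distribution $\pi(\cdot)$ of \eqref{SDE} then follow from Theorem 3.1 of \cite{YM2003}, whose hypotheses are exactly these two conditions.

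With Conditions \ref{as21} and \ref{as22} both verified for the same exponent $p$, and with the uniform-in-time strong convergence of the BEM scheme provided by Theorem \ref{T312}, Theorem \ref{T25} applies verbatim and delivers
\begin{equation*}
\lim_{\substack{jh \to \infty \\ h \to 0}} \mathit{d}_{\mathbb{L}}\!\big(\mathbb{P}_j(x_0,\cdot),\pi(\cdot)\big) \;=\; 0,
\end{equation*}
which is the desired conclusion. The only point requiring care, rather than a substantive obstacle, is maintaining a consistent choice of $p$ throughout: each of Lemmas \ref{LM310}--\ref{LM36} imposes its own smallness requirement on $p$, and one simply takes $p$ to lie in the intersection of the four admissible ranges. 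Apart from this bookkeeping, the proof is an immediate assembly of the results already established.
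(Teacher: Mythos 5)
Your proposal is correct and follows essentially the same route as the paper: the authors likewise verify Condition \ref{as22} via Lemma \ref{LM311}, note that Lemmas \ref{LM310}--\ref{LM36} (with $r_1=l_1p/2$ and $\Psi(x)=1+x^{l_1p/2}$) validate Condition \ref{as21}, and then cite Theorem \ref{T25} together with \cite{YM2003} to conclude. Your remark on choosing a single $p$ in the intersection of the admissible ranges is the same bookkeeping the paper performs implicitly.
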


\section{Numerical examples}
\begin{example}
    The Ginzburg-Landau equation is from the theory of superconductivity. Its
stochastic version with multiplicative noise can be written as
\begin{equation}\label{NE-1}
    dx(t) = \Big((\alpha + \frac{1}{2} \sigma^2)x(t) - x^3(t)\Big)dt + \sigma x(t)dW(t),
\end{equation}
And the exact solution is known to be \cite{KP1992}
\begin{equation*}
    x(t) = \frac{x(0)\exp(\alpha t + \sigma W(t))}{\sqrt{1+2x^2(0)\int_0^t \exp (2 \alpha s + 2 \sigma W(s))ds}}.
\end{equation*}
If $\alpha = -\frac{1}{4}$ and $\sigma = 1$, by setting $l_1 = 10$, $l_2=3$, and $\alpha = \beta =0.01$, it is not hard to see that the
drift and diffusion coefficients of \eqref{NE-1} satisfy \eqref{as31} - \eqref{as34} with $p = 0.001 $, $q=3$, $L_1=1.5$, $k_1=-10.75$, $c_1=10.5$, $c_2=0$, $k_2=-3.75$ and $c_3=3.5$.
From our previous analysis, we can see that the BEM method is strongly uniform convergence with order p, which means that for any $j>0$ and $h \in (0, h^*)$, there is a constant $C>0$ such that
\begin{equation}\label{NE-2}
  e^{strong}_{h} := \mathbb{E}|x(jh) - X_j|^p \leq C h^{\frac{p}{2}}.
\end{equation}
Then we use the BEM method to simulate 1000 sample paths with $x_0 = 1$ and $h = 0.001$, and the mean of sample points generated by these paths at the same time point are used to construct the approximation to $e^{strong}_{h}$ against $h^{\frac{p}{2}}$ over $[0, 200]$. As shown in Figure \ref{estrong}, it is clear that the curve is roughly stable, which indicates that there is indeed a constant $C$ satisfying the inequality \eqref{NE-2}.

\begin{figure}
    \centering
    \includegraphics[width=0.80\textwidth]{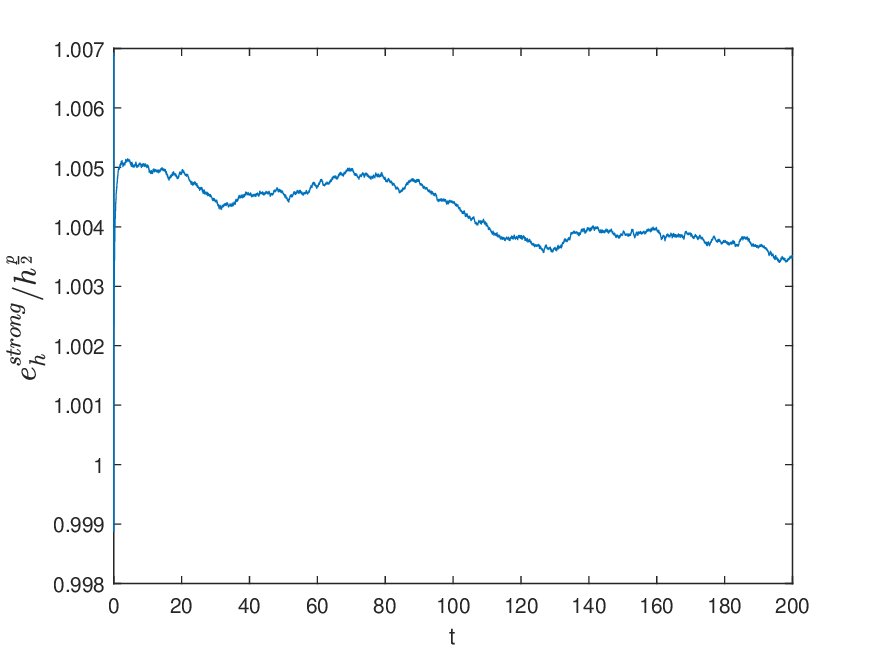}
    \caption{The approximation to $e^{strong}_{h}$ against $h^{\frac{p}{2}}$ over $[0, 200]$}
    \label{estrong}
\end{figure}
\end{example}

\begin{example}
    Consider a two dimensional SDE
      \begin{align*}\label{ex2}
             d\left[
        \begin{array}{cc}
             x_1(t) \\
             x_2(t)
        \end{array}
        \right]
        &=
        \left[
        \begin{array}{cc}
         1+   0.1x_1(t) -x_2(t) - 21x_1^3(t) -21x_1^5(t) \\
          1+   x_1(t)+ 0.1x_2(t) - 21x_2^3(t) -21x_2^5(t)
        \end{array}
        \right]
        dt  \\
        & \quad+
         \left[
        \begin{array}{cc}
            x_1(t) -0.2x_2(t) + x_1^3(t) - 0.2 x_2^3(t)\\
             0.2x_1(t) + x_2(t) + 0.2 x_1^3(t) + x_2^3(t)
        \end{array}
        \right]
        dW(t).
      \end{align*}

     Let $l_1 =20$, $l_2 = 5$ and $\alpha = \beta =0.025$, it is not hard to see that \eqref{as31} -\eqref{as34} are satisfied with $p = 0.001 $, $q=5$, $L_1=40$, $k_1=-21$, $c_1=20.5$, $c_2=10$, $k_2=-6$ and $c_3=5.5$.
We use the BEM method to simulate 1000 sample paths with $x_0=[1,1]^T$ and $h=0.001$, and then the sample points generated by these paths at the same time point are used to construct the corresponding empirical density function.
According to the \eqref{T315}, the underlying solution has a unique stationary distribution. However, its explicit form is hard to find.
Therefore, to intuitively show that the underlying solution does have a unique stationary distribution, the empirical distribution at $t=5$ is regarded as the stationary distribution. Then we use the Kolmogorov-Smirnov test (K-S test) to measure the difference between the empirical distribution and the stationary distribution at each time point. As shown in Figure \ref{ks-test}, the difference tends to 0, which indicates the numerical stationary distribution is quite a good approximation to the underlying one.
 \begin{figure}[H]
    \centering
    \includegraphics[width=0.80\textwidth]{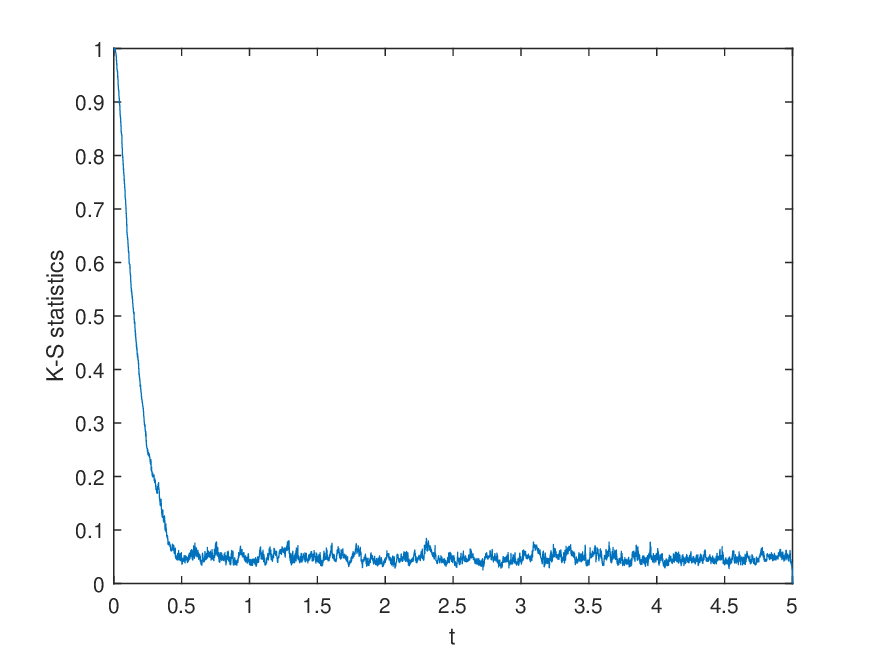}
    \caption{K-S statistics}
    \label{ks-test}
\end{figure}
\end{example}

\section{Conclusion and future works}
In this paper, a quite general result about the strong convergence in the infinite horizon of numerical methods for SDEs is proved. This result could cover many different numerical methods, as the proof does not need the detailed structure of the numerical methods.
\par
In addition, as the driven noise are only required to be the independent and stationary, the results still hold if the Brownian motion is replaced by other proper processes.
\par
Right now, we are working on a similar result for stochastic delay differential equation (SDDEs) and trying to build a connection between the numerical methods for SDEs and SDDEs.

\bibliography{References}

\end{document}